\documentclass{article}
\usepackage[utf8]{inputenc}
\usepackage{multicol}
\usepackage{caption}
\usepackage{subcaption}
\usepackage{tikz}
\usepackage{float}
\usepackage{lineno}
\usepackage{graphicx}
\usepackage{comment}
\usepackage[shortlabels]{enumitem}
\usepackage{amsthm,amsmath, amsfonts}
\usepackage{fancyhdr} 
\usepackage{appendix} 
\usepackage{caption} 
\usepackage{hyperref}
\usetikzlibrary{calc}

\newtheorem{theorem}{Theorem}
\newtheorem*{theorem*}{Theorem}
\newtheorem{proposition}[theorem]{Proposition}
\newtheorem{corollary}[theorem]{Corollary}
\newtheorem{lemma}{Lemma}
\newtheorem{definition}{Definition}

\newtheorem{question}{Question}
\newtheorem{example}{Example}
\numberwithin{theorem}{section}
\numberwithin{definition}{section}

\DeclareMathOperator{\ch}{ch}
\DeclareMathOperator{\CH}{CH}

\PassOptionsToPackage{hyphens}{url}\usepackage{hyperref}

\pagestyle{fancy}
\fancyhf{}

\cfoot{\thepage}

\title{Convex geometries representable by at most 5 circles on the plane}
\author{Polly Mathews Jr.\thanks{This work was done as part of \emph{PolyMath REU-2020} program. Team mentor: Kira Adaricheva (Hofstra University). 
Graduate assistants: Madina Bolat (alumna UIUC), Gent Gjonbalaj (Tufts University). 
Team members: Brandon Amerine (Western Oregon University), J. Alexandria Behne (Texas A\&M University), 
Evan Daisy (Amherst College), 
Fernanda Yepez-Lopez (University of Virginia), 
Alexander Frederiksen (University of Central Florida),
Ayush Garg (IIT Delhi), 
Zachary King (University of Central Oklahoma), 
Grace Ma (University of Notre Dame), 
Michelle Olson (California State University, Fullerton), Rohit Pai (NYU), Junewoo Park (Brown University), Cat Raanes (University of British Columbia), Joseph Rogge (UIUC), Raviv Sarch (University of Michigan, Ann Arbor), Sean Riedel (University of California, Santa Cruz), Stephanie Zhou (Rutgers University), James Thompson (UNC-Chapel Hill).}} 

\begin{document}
\maketitle
\begin{abstract} A convex geometry is a closure system satisfying the anti-exchange property. In this work we document all convex geometries on 4- and 5-element base sets with respect to their representation by circles on the plane. All 34 non-isomorphic geometries on a 4-element set can be represented by circles, and of 672 known geometries on a 5-element set,
 we made representations of 623. Of the 49 remaining geometries on a 5-element set, one was already shown not to be representable due to the Weak Carousel property, as articulated by Adaricheva and Bolat (Discrete Mathematics, 2019). In this paper we show that 7 more of these convex geometries cannot be represented by circles on the plane, due to what we term the \emph{Triangle Property}. 
\end{abstract}

\tableofcontents 

\section{Introduction}

In this project we address the problem raised in Adaricheva and Bolat \cite{AdBo19}: whether all geometries 
with convex dimension at most 5 are representable by circles on the plane using the closure operator of convex hull for circles.

An early survey on the topic of convex geometries is given by Edelman and Jamison \cite{EdJa85}, and the theory of infinite convex geometries is developed in Adaricheva, Gorbunov, Tumanov \cite{AGT03}. A recent survey on the topic, which includes both finite and infinite convex geometries,  is in a chapter of \textit{Lattice Theory: Special Topics and Applications} (2016), by Adaricheva and Nation \cite{AN16}.

The representation of finite convex geometries by circles is introduced by G. Cz\'edli in \cite{C14}, where the usual convex hull closure operator acting on sets of points is generalized to the convex hull operator acting on circles. Moreover, he proves that every finite convex geometry of convex dimension 2 can be represented by circles on the plane.

The natural question of whether a finite convex geometry of any convex dimension can be represented by circles on the plane is answered in \cite{AdBo19} \emph{in the negative}: it turns out that the convex hull operator acting on circles in the plane satisfies a condition called the $2\times 3$-\emph{Weak carousel property} - and not all finite convex geometries satisfy it. An example of a geometry with a convex dimension of 6 that does not satisfy this condition is found on a 5-element set.

This left the question of whether any convex geometry on an at most 5-element set, with convex dimension at most 5, could be represented by circles on the plane. 

The goal of the project was to document all existing convex geometries on 4- and 5-element sets, to represent them by circles (when possible), and to describe the features of ``impossible" geometries.

After generating a list of geometries on 4- and 5-element sets, we found representations of all 34 discovered non-isomorphic geometries on 4-element sets and all 623 discovered geometries on 5-element sets. However, 49 of the 672 generated geometries on a 5-element set resisted representation by circles. 

Geometry G4 from the list of 672 non-isomorphic geometries on a 5-element set is the ``impossible" geometry shown not to be representable in \cite{AdBo19}. In this paper we prove that 7 more geometries from the list of 672 are not representable. The proof is based on the Triangle Property (Lemma \ref{triangle} in section \ref{math}).

Three geometries from this new set of ``impossible" geometries have a convex dimension of 4, and one has a convex dimension of 5. {\bf This provides a solution to Problem 1 in \cite{AdBo19} since geometries not representable by circles exist with those convex dimensions.} 

On the other hand, all geometries on 4- and 5-element sets with convex dimension 3 are representable by circles. Thus, the following question remains:

\begin{question}
Does there exist a convex geometry of convex dimension 3 not representable by circles on the plane? 
\end{question}

The paper is organized as follows. We provide all necessary definitions related to convex geometries in section \ref{preliminaries}. In particular the parameter of \emph{convex dimension} is explained in detail. Section \ref{code} provides the background for the algorithm generating all non-isomorphic geometries (anti-matroids) on $n$-element set, for values of $n$ up to $7$. The algorithm was developed around 2013, in the framework of enumerating anti-matroids in the On-line Encyclopedia of Integer Sequences (OEIS).
The existing code was enhanced to additionally compute the \emph{implicational basis} of each geometry and its convex dimension.
Sections \ref{conquer} and \ref{chunks} are devoted to methods for representing geometries by circles. Some basic principles of verifying representations are discussed in section \ref{check}. Sections \ref{Geo} and \ref{Google} provide an overview of different software available for enhancing this area of mathematical work, in our case a GeoGebra and Google Colab toolkit. Finally, section \ref{math} gives the proof of the Triangle Property, and its consequences for a subset of ``impossible" geometries not representable by circles.

We provide two sets of appendices: in Appendix A there is a list of 34 geometries on 4-element set, and a list of 672 geometries on 5-element set, given by their alignments (convex sets), as well as sets of implications representing the associated closure operator. The meet-irreducible elements of alignments are provided, together with their maximal anti-chains, which allows the computation of each geometry's convex dimension.

Appendix B has representations by circles for all 34 discovered geometries on a 4-element set, as well as 623 geometries on a 5-element set. The numbering corresponds to the order of geometries in the lists featured in Appendix A.

The last part of Appendix B lists 49 geometries which are either cannot be represented or not yet represented, giving their implicational bases, also featured in Appendix A. The list of these geometries
is as follows: 
\[
G4, G7, G11, G12, G14, G15, G18, G21, G23, G26, G27, G33, G34, G35, G39, G43,
\]
\[G45, G46, G47, G49, G54, G56, G57, G60, G69, G70, G74, G84, G87, G89, G94,
\]
\[G95, G96, G105, G114, G115, G122, G129, G132, G134, G143, G147, G153,
\]
\[G161, G175, G206, G211, G235, G351. 
\]

Among these 49 geometries, we show in this paper that the following are not representable by circles:
\[
G7, G11, G12, G18, G21, G23, G34.
\]
Excluding these 7 geometries, as well as earlier example G4 which cannot be represented, we ask:
\begin{question}
Are the 41 remaining geometries on the list above representable by circles on the plane?
\end{question}

\section{Preliminaries}\label{preliminaries}
\subsection{Basic Definitions}
A convex geometry is a special case of a closure system. It can be defined through a closure operator, or through an alignment.
\begin{definition}\label{def:closure}
Let $X$ be a set. A mapping $\varphi \colon 2^X \to 2^X$ is called a \emph{closure operator}, if 
for all $Y, Z \in 2^X$:
\begin{enumerate}[noitemsep]
    \item $Y \subseteq \varphi(Y)$
    \item if $Y \subseteq Z$ then $\varphi(Y) \subseteq \varphi(Z)$
    \item $\varphi(\varphi(Y)) = \varphi(Y)$
\end{enumerate}
A subset $Y \subseteq X$ is \emph{closed} if $\varphi(Y) = Y$. The pair $(X,\varphi)$, where $\varphi$ is a closure operator, is called a \emph{closure system}.
\end{definition}

\begin{definition}\label{def:alignment}
Given any (finite) set $X$, an \textit{alignment} on $X$ is a family
$\mathcal{F}$ of subsets of $X$ which satisfies two properties:
\begin{enumerate} [noitemsep]
    \item $X \in \mathcal{F}$
    \item If $Y, Z \in \mathcal{F}$ then $Y \cap Z \in \mathcal{F}$.
\end{enumerate}
\end{definition}
\noindent Closure systems are dual to alignments in the following sense:\newline

\noindent If $(X, \varphi)$ is a closure system, one can define a family of closed sets $\mathcal{F} :=\{Y \subseteq X : \varphi(Y) = Y\}$. Then $\mathcal{F}$ is an alignment.\newline

\noindent If $\mathcal{F}$ is an alignment, then define $\varphi: 2^X \rightarrow 2^X$ in the
following manner:\\
for all $Y \subseteq X$, let 
$\varphi(Y):=\bigcap  \{Z \in \mathcal{F}
: Y \subseteq Z\}$.
Then $(X, \varphi)$ is a closure system. \newline

\begin{definition}\label{def:cg}
A closure system $(X,\varphi)$ is called a \emph{convex geometry} if
\begin{enumerate}[noitemsep]
    \item $\varphi(\emptyset) = \emptyset$
    \item For any closed set $Y\subseteq X$ and any distinct points $x,y \in X\setminus Y,$ if $x \in
    \varphi(Y \cup \{y\})$ then $y \not\in \varphi(Y \cup \{x\})$.
\end{enumerate}
\end{definition}
The second property in this definition is called the \emph{Anti-Exchange Property}.

\noindent We can use duality between closure operators and alignments to provide another definition of a convex
geometry.
\begin{definition}
\label{cg_alignment}
A closure system $(X,\varphi)$ is a convex geometry iff the corresponding alignment
$\mathcal{F}$ satisfies the following two properties:
\begin{enumerate}[noitemsep]
    \item $\emptyset \in \mathcal{F}$
    \item If $Y \in \mathcal{F}$ and $Y \neq X$ then $\exists a\in X\setminus Y$
    s.t. $Y\cup\{a\} \in \mathcal{F}$.
\end{enumerate}
\end{definition}

\subsection{Convex Dimension}\label{cdim}
This section follows the survey on convex geometries by Edelman and Jamison \cite{EdJa85}.
\begin{definition}
An alignment $\mathcal{F}$ is called a \textit{monotone alignment} if the
sets of $\mathcal{F}$ form a chain under the inclusion order.
\end{definition}
\begin{definition}
Given two alignments $\mathcal{F}, \mathcal{K}$, their \textit{join} $\mathcal{F}\vee \mathcal{K}$ is defined to be
the smallest alignment that contains both $\mathcal{F}$ and $\mathcal{K}$. Explicitly, for alignments on finite set $X$, 
this is the collection of all intersections of sets taken from
$\mathcal{F}, \mathcal{K}$: $\mathcal{F}\vee \mathcal{K}=\{ F\cap K : F \in
\mathcal{F} \text{ and }  K \in \mathcal{K} \}$.
\end{definition}
\noindent A known result in \cite{EdJa85} about joins follows.
\begin{theorem}
Every alignment can be expressed as the join of some collection of monotone
alignments on the same base set.
\end{theorem}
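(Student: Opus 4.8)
The plan is to give a constructive existence proof using the meet-irreducible elements of the alignment, viewed as a finite lattice. Recall that a finite alignment $\mathcal{F}$, ordered by inclusion, is a lattice in which the meet of two closed sets is their intersection. Call $M \in \mathcal{F}$ \emph{meet-irreducible} if $M \neq X$ and whenever $M = Y \cap Z$ with $Y, Z \in \mathcal{F}$ one has $M = Y$ or $M = Z$. First I would record the standard lattice fact that, because $X$ is finite and hence $\mathcal{F}$ is finite, every $F \in \mathcal{F}$ is an intersection of meet-irreducible elements, namely $F = \bigcap \{M \in \mathcal{F} : M \text{ meet-irreducible and } F \subseteq M\}$, with the empty intersection interpreted as $X$. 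This is proved by induction on the number of closed sets strictly above $F$: if $F$ is itself meet-irreducible (or equals $X$) there is nothing to do, and otherwise $F = Y \cap Z$ with $Y, Z \supsetneq F$, each of which is an intersection of meet-irreducibles above it by the induction hypothesis.

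Next, for each meet-irreducible $M$ of $\mathcal{F}$ I would form the two-element family $\mathcal{C}_M := \{M, X\}$. I claim each $\mathcal{C}_M$ is a monotone alignment: it contains $X$, it is closed under intersection since $M \cap X = M$, and its two members are comparable under inclusion, so they form a chain. Thus $\{\mathcal{C}_M : M \text{ meet-irreducible}\}$ is a finite collection of monotone alignments on $X$.

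It remains to verify that the join of this collection is exactly $\mathcal{F}$. Here I would first observe that, because the join of alignments is associative and an alignment is closed under intersection, the iterated join $\bigvee_M \mathcal{C}_M$ equals the family of all sets of the form $\bigcap_M S_M$ where $S_M \in \mathcal{C}_M$; this family is itself an alignment (it contains $X$ and is closed under intersection, since $S_M \cap S_M' \in \mathcal{C}_M$ for each $M$) and is clearly the smallest one containing every $\mathcal{C}_M$. Choosing $S_M \in \{M, X\}$ amounts to selecting a subset $A$ of the meet-irreducibles and intersecting them, so the join is precisely $\{\bigcap_{M \in A} M : A \subseteq \mathcal{F} \text{ a set of meet-irreducibles}\}$. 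By the first step this family contains every $F \in \mathcal{F}$, and by closure of $\mathcal{F}$ under intersection every such intersection lies in $\mathcal{F}$; hence the join equals $\mathcal{F}$.

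For this particular statement there is no serious obstacle — it is essentially the finite-lattice fact that the meet-irreducibles are meet-dense, repackaged in the language of joins of chains. The only points requiring care are (i) confirming that the candidate families really are alignments (closure under intersection and membership of $X$), and (ii) pinning down the iterated join as ``all one-from-each intersections,'' for which associativity of $\vee$ and finiteness are the relevant inputs. An alternative, equally short route would replace the two-element chains $\{M, X\}$ by the maximal chains of the lattice $\mathcal{F}$: each maximal chain is automatically a monotone alignment (a chain is closed under intersection, and a maximal chain contains the top element $X$), and since every closed set lies on some maximal chain, the same one-from-each-chain argument recovers $\mathcal{F}$. I would present the meet-irreducible version as the main argument, since it is the decomposition that later governs the convex dimension.
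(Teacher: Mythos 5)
Your proof is correct, but there is nothing in the paper to compare it against: the paper states this theorem as a known result of Edelman and Jamison \cite{EdJa85} and gives no proof of it, so your argument is a genuine, self-contained addition rather than a variant of an internal proof. The argument itself is sound at every step that matters: the families $\mathcal{C}_M=\{M,X\}$ are monotone alignments under the paper's definitions (which require only $X\in\mathcal{F}$ and closure under intersection, not $\emptyset\in\mathcal{F}$); the join of a finite collection of alignments is exactly the family of one-from-each intersections, and your checks that this family is an alignment and is the smallest one containing each $\mathcal{C}_M$ are precisely what is needed to extend the paper's binary definition of $\vee$; and the meet-density of meet-irreducibles in a finite lattice, proved by your induction, gives $\mathcal{F}\subseteq\bigvee_M\mathcal{C}_M$, while closure of $\mathcal{F}$ under intersection gives the reverse inclusion. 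The only degenerate case worth a word is $\mathcal{F}=\{X\}$, which has no meet-irreducibles; it is itself a monotone alignment, so the theorem still holds. Your choice of decomposition is also the one that best fits the paper's later machinery: the Edelman--Saks theorem quoted afterwards (convex dimension equals the largest antichain of meet-irreducibles) is obtained by replacing your singleton chains $\{M,X\}$ with a minimal cover of the meet-irreducibles by chains, each augmented by $X$; by Dilworth's theorem the number of chains needed equals the largest antichain. Your construction is the extreme case of that decomposition (one chain per meet-irreducible), which proves existence but uses the maximal number of monotone alignments, whereas the chain-cover refinement is what yields the optimal count defining convex dimension.
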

This motivates the following definition.
\begin{definition}
The \emph{convex dimension} of a convex geometry $G$ is the least
number $k$ such that the alignment $\mathcal{F}$ corresponding to
$G$ can be expressed as the join of $k$ monotone alignments.
\end{definition}
\noindent To compute the convex dimension of a convex geometry, we can examine
antichains of meet-irreducibles.
\begin{definition}
A \textit{meet-irreducible} in an alignment
$\mathcal{F}$ is a closed set $Y \in \mathcal{F}$ for which there exists $Z \in \mathcal{F}$ such that
\begin{enumerate}[noitemsep]
    \item $Y \subset Z$
    \item If $Y\subset W$, for some closed set $W$, then $Z\subseteq W$.
\end{enumerate}
\end{definition}

\noindent This is equivalent to saying that if $Y=\bigcap_{i\in I} Y_i$ where $Y_i\in \mathcal{F}$ for all $i$, then $Y=Y_i$ for some $i\in I.$ \newline

\noindent Note that by definition \ref{cg_alignment} of a convex geometry, we can additionally claim that $|Z\setminus Y|=1$. 

\begin{definition}
In a partially ordered set, an \emph{antichain} is a collection of pairwise
non-comparable elements.
\end{definition}
\noindent Set inclusion forms a partial order on the elements of an alignment, and this
partial order allows us to talk about antichains of elements of an alignment.
The following result is due to Edelman and Saks \cite{EdSa88}.
\begin{theorem}\label{thm:cdim}
Let $G$ be a convex geometry and $\mathcal{F}$ be the corresponding alignment.
The convex dimension of $G$ is equal to the largest size antichain of
meet-irreducibles in $\mathcal{F}$.
\end{theorem}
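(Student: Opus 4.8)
The plan is to prove Theorem~\ref{thm:cdim} by establishing a bijective correspondence between decompositions of the alignment $\mathcal{F}$ into monotone alignments and coverings of the meet-irreducibles by chains, then invoking Dilworth's theorem (or its dual, Mirsky's theorem) to translate chain-coverings into the maximum antichain size. First I would recall the standard lattice-theoretic fact that in a finite meet-semilattice, every element is the meet of the meet-irreducibles above it; applied here, every closed set $Y \in \mathcal{F}$ is the intersection of the meet-irreducibles containing it. This means $\mathcal{F}$ is completely determined by its poset of meet-irreducibles, so the decomposition problem can be phrased purely in terms of that poset.

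The key structural observation is that a monotone alignment (a chain of closed sets) contributes, among its meet-irreducibles, exactly a chain in the poset of meet-irreducibles of $\mathcal{F}$. More precisely, I would show that if $\mathcal{F} = \mathcal{M}_1 \vee \cdots \vee \mathcal{M}_k$ is a join of $k$ monotone alignments, then every meet-irreducible of $\mathcal{F}$ must already be a meet-irreducible in at least one $\mathcal{M}_i$ (otherwise it would be a proper intersection of strictly larger sets coming from the join, contradicting irreducibility). Since each $\mathcal{M}_i$ is a chain, the meet-irreducibles it supplies form a chain in $\mathcal{F}$. Thus a decomposition into $k$ monotone alignments yields a covering of the meet-irreducibles of $\mathcal{F}$ by $k$ chains. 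Conversely, given a covering of the meet-irreducibles by $k$ chains, I would build $k$ monotone alignments whose join recovers $\mathcal{F}$: each chain of meet-irreducibles, closed up appropriately with $X$ and $\emptyset$, generates a monotone alignment, and taking all intersections across the $k$ chains regenerates every closed set because each closed set is the intersection of the meet-irreducibles above it. This two-way construction shows that the minimum number of monotone alignments in a join decomposition equals the minimum number of chains needed to cover the poset of meet-irreducibles.

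With the correspondence in hand, the final step is purely order-theoretic: by Dilworth's theorem, the minimum number of chains covering a finite poset equals the size of its largest antichain. Applying this to the poset of meet-irreducibles of $\mathcal{F}$ immediately gives that the convex dimension equals the largest antichain of meet-irreducibles, as claimed.

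I expect the main obstacle to lie in the converse direction of the correspondence, specifically in verifying that the join of the $k$ monotone alignments built from a chain-covering reproduces \emph{exactly} $\mathcal{F}$ and not a coarser or finer alignment. The subtlety is that each chain of meet-irreducibles does not by itself form an alignment (it need not be closed under intersection, nor contain $X$ and $\emptyset$), so I must complete it to a genuine monotone alignment and then argue that intersecting across all $k$ chains yields precisely the closed sets of $\mathcal{F}$ — which hinges on the representation of every closed set as an intersection of the meet-irreducibles above it, together with the fact that each meet-irreducible appears in at least one chain. Care is also needed at the boundary cases $\emptyset$ and $X$, which are shared by all monotone alignments and must not inflate the dimension count.
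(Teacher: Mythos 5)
The paper does not actually prove this theorem: it states it as a known result and cites Edelman and Saks \cite{EdSa88}, using it only as a computational tool for determining convex dimension. Your proposal therefore cannot be compared against a proof in the paper; what it does is correctly reconstruct the classical argument behind that citation. Both directions of your correspondence are sound. If $\mathcal{F}=\mathcal{M}_1\vee\dots\vee\mathcal{M}_k$, then every meet-irreducible $Y$ of $\mathcal{F}$ has the form $M_1\cap\dots\cap M_k$ with $M_i\in\mathcal{M}_i\subseteq\mathcal{F}$, so the intersection characterization of irreducibility forces $Y=M_i$ for some $i$; note that all you need here is the \emph{membership} $Y\in\mathcal{M}_i$, not that $Y$ be ``meet-irreducible in $\mathcal{M}_i$,'' and membership suffices because any subset of a chain is a chain. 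Conversely, completing each chain $C_i$ of meet-irreducibles to the monotone alignment $C_i\cup\{X\}$ does the job: the join lands inside $\mathcal{F}$ because $\mathcal{F}$ is intersection-closed, and it exhausts $\mathcal{F}$ because each closed set is the intersection of the meet-irreducibles above it, and within each finite chain those above a given closed set have a least element (take $X$ when there are none) --- exactly the resolution you sketch for your ``main obstacle.'' Two small corrections: the theorem you need is Dilworth's (minimum chain cover equals maximum antichain); Mirsky's theorem, which you offer parenthetically as an interchangeable alternative, relates antichain covers to the longest chain and does not apply here. Also, adjoining $\emptyset$ to the chains is unnecessary --- by Definition \ref{def:alignment} an alignment need only contain $X$ and be closed under intersection --- though it is harmless, since $\emptyset$ is closed in a convex geometry.
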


\subsection{Convex hull operator}

A particular example of a closure operator on a set is the convex hull operator, where the base set $X$ is a set of points in Euclidean space $\mathbb{R}^n$. For the goals of this paper we use only the plane, i.e. the space $\mathbb{R}^2$.

\begin{definition}
A set $S$ in $\mathbb{R}^2$ is called  \emph{convex} if for any two points $p,q \in S$, the line segment connecting $p$ and $q$ is also contained in $S$. 
\end{definition}

See figure below for an illustration.
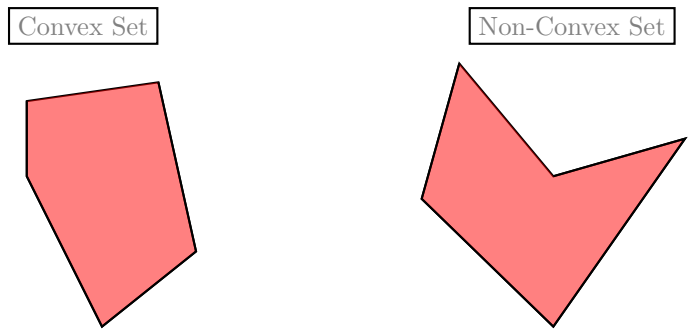
\begin{figure}[H]
    \centering
  \begin{tikzpicture}[thick,fill opacity=0.5]
      \draw (-4,3) - - (-4,2) - - (-3,0) - - (-1.75,1) - - (-2.25,3.25) - - cycle ;
      \filldraw[fill=red] (-4,3) - - (-4,2) - - (-3,0) - - (-1.75,1) - - (-2.25,3.25) ;
  \node[draw] at (-3.25,4) {Convex Set};
      
     \draw (3,2) - - (4.75,2.5) - - (3,0) - - (1.25,1.7) - - (1.75,3.5) - - cycle ;
      \filldraw[fill=red] (3,2) - - (4.75,2.5) - - (3,0) - - (1.25,1.7) - - (1.75,3.5) ;
      
        \node[draw] at (3.25,4) {Non-Convex Set};
    
   \end{tikzpicture}  
   \caption{Example of a set which is convex and a set which is not convex.}
\end{figure} 

\begin{definition}
Given set $S$ of points in $\mathbb R ^ 2$, the convex hull of $S$, in notation $\CH(S)$, is the intersection of all convex sets in $\mathbb R ^2$ which contain $S$.
\end{definition}

From the illustration, if $S=\{A,B,C,D,E,F,G\}$, a set of of 7 points on the plane, $\CH(S)$ is a convex shape with vertices $A,C,D,F,G$ in Figure \ref{fig:CH}.

\begin{figure}[H]
    \centering
  \begin{tikzpicture}[thick,fill opacity=0.5]
      \draw  (-2,1) - - (-.5,2.8) - -(.5,3) - - (3,2.5) - -(2,0) - - cycle;
    \filldraw[fill=red]  (-2,1) - - (-.5,2.8) - -(.5,3) - - (3,2.5) - -(2,0);

   \filldraw [black, thick] (-2,1) circle[radius=0.075] node[anchor=south] {$A$};
    \filldraw [black, thick] (1,1.75) circle[radius=0.075] node[anchor=south] {$B$};
    \filldraw [black, thick] (2,0) circle[radius=0.075] node[anchor=south] {$C$};
     \filldraw [black, thick] (-.5,2.8) circle[radius=0.075] node[anchor=south] {$D$};
    \filldraw [black, thick] (1,1) circle[radius=0.075] node[anchor=south] {$E$};
    \filldraw [black, thick] (.5,3) circle[radius=0.075] node[anchor=south] {$F$};
    \filldraw [black, thick] (3,2.5) circle[radius=0.075] node[anchor=south] {$G$};

   \end{tikzpicture}  
  \caption{The convex hull $\CH(ABCDEFG)$}
  \label{fig:CH}
  \end{figure}
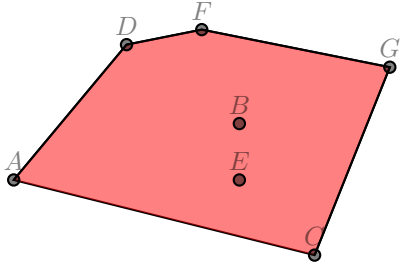  

The following definition allows us to introduce a closure operator induced by the convex hull, on any set of points $X$ on the plane. 

\begin{definition}
Let $X$ be a finite set of points in $\mathbb{R}^2$. Define an operator $\ch : 2^X \rightarrow 2^X$ as follows:
\[
 \ch(Y) = \CH(Y)\cap X,
\]
for any $Y\in 2^X$.
\end{definition}

It is straightforward to check that $\ch$ is a closure operator. For example, $\ch(ACDFG)=S$ in Figure \ref{fig:CH}. Moreover, $\ch$ satisfies the Anti-Exchange Property. Therefore, $(X,\ch)$ is a convex geometry.\\

Finally, for the purposes of the current paper we want to recall the definition of the convex hull operator for circles introduced in Cz\'edli \cite{C14}.

If $x$ is a circle on the plane, then by $\tilde{x}$ we denote a set of points belonging to $x$. We allow a circle to have a radius $0$, in which case it is a point.

\begin{definition}
Let $X$ be a finite set of circles in $\mathbb{R}^2$. Define the operator $\ch_c : 2^X \rightarrow 2^X$, a convex hull operator for circles, as follows:
\[
\ch_c(Y) =\{x \in X: \tilde{x}\subseteq \CH(\bigcup_{y \in Y}\tilde{y})\},
\]
for any $Y\in 2^X$.
\end{definition}
For example, $a, e \in \ch_c(b,c,d)$ on Figure \ref{fig:ch1}, while $a \not \in \ch_c(b,c,d)$, $e \in \ch_c(b,c,d)$ in Figure \ref{fig:ch2}.

%

\begin{figure}
\begin{center}
    \begin{subfigure}[a]{0.4\textwidth}
        \centering
       \includegraphics[width=\textwidth]{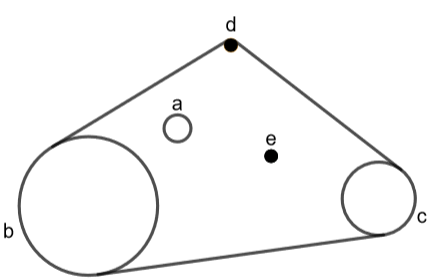}
      \caption{Circle $a$ is in the convex hull of $b,c,d$}
           \label{fig:ch1}
    \end{subfigure}
\hspace{1cm}
    \begin{subfigure}[a]{0.4\textwidth}
        \centering
        \includegraphics[width=\textwidth]{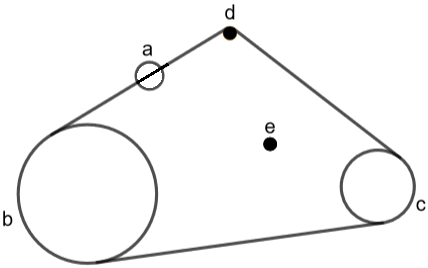}
        \caption{Circle $a$ is not in the convex hull of $b,c,d$}
            \label{fig:ch2}
    \end{subfigure}
\end{center}
\caption{Examples of convex hulls}
\label{fig:convex hull example 2}
\end{figure}

\section{Creating the list of geometries on 4- and 5-element sets} \label{code} 

Our description of convex geometries represented by 4 or 5 circles on the plane relies on the OEIS \cite{oeis} (On-line Encyclopedia of Integer Sequences) submission on the number of non-isomorphic antimatroids by Przemysław Uznański \cite{Uz13}.

From a combinatorial point of view, a family $\mathcal{F}$ of subsets of (finite) $X$ is called an \textit{antimatroid} if and only if:

\begin{enumerate}
    \item $\mathcal{F}$ is closed under unions, i.e. $\forall  Z,Y \in \mathcal{F}, Z\cup Y \in \mathcal{F}$
    \item $\mathcal{F}$ is non-empty accessible, i.e $\forall Y \in \mathcal{F}$ either $Y = \emptyset$ or $\exists\; x \in Y$ such that $Y\setminus x \in \mathcal{F}$.
\end{enumerate}

\noindent A close look at the properties reveals that there exists a bijective map $f$, between antimatroids and convex geometries given by:

\begin{equation}
    f(\mathcal{F}) = \{ Y^c \colon Y \in \mathcal{F} \}
\end{equation}
Here $Y^c = X\setminus Y$, the set-theoretic complement of the subset $Y\subseteq X$.

\noindent Uzna\'nski in \cite{Uz13} generated a list of all non-isomorphic antimatroids on base sets of order 3, 4, 5 and 6 using an extension property on antimatroids combined with a reverse tree traversal algorithm. The theoretical background of the algorithm was developed in Kempner and Levit \cite{KeLe03}. 

Adapting Uzna\'nski's original codebase, all convex geometries can be easily generated using the bijection noted above. The antimatroids and their corresponding geometries are represented as elements of $2^{2^X}$ using their associated alignments as described by Definition \ref{def:alignment}. For efficiency and portability however, the alignments are physically stored, and listed in the Appendices A1 and A2, as 32-bit integers through a programming technique known as \textit{bitmasking} \cite{MaskG}. 

Given a 32-bit representation $b_{31}b_{30} \dots b_1b_0$ and $X = \{ x_1 \dots x_n \}$ where $n = 3, 4, 5$, one may obtain the convex sets, $F$, by writing $S = \{ i \ \colon \ b_i = 1 \}$. Each convex set $F_k$ is then given by

\begin{equation}
    F_k = \{ x_i \ \colon \ c_i = 1, \  c_5 \dots c_1 = S_k \in S\}
\end{equation}
where $c_i$ are the bits of $S_k$.

\begin{example}
\end{example}
 For a base set of order 3, represented as $\{a,b,c\},$ assign the zeroth, first and second bits to $a,b$ and $c$ respectively. Then $\{a\}$ is represented as 001(1), $\{b\}$ is represented as 010(2), $\{a,b,c\}$ is represented as 111(7), $\emptyset$ as 000(0) and so on.
\\
\noindent It is easy to check that the decimal equivalents of the binary representation of subsets form the set $\{0,1,2\dots2^n-1\}$ where $n$ is the order of the base set. A family $F$ has its $n^{th}$ bit set (value 1), if and only if the subset is represented by $n$ $\in F.$ 

\begin{example}
\end{example}
For example, for a base set of order 3, the family $ F=\{\emptyset,\{a\},\{a,b\},\{a,b,c\}\}$ is represented as 10001011(139) because $\emptyset \to 0,\{a\} \to 1,\{a,b\}\to 3,\{a,b,c\} \to 7$, and hence the zeroth, first, third and seventh bits are set. 
\\

\noindent A useful perspective on a convex geometry is provided by its \textit{implications.} Given a convex geometry $(X, \varphi)$ with alignment $\mathcal{F}$, and ordered pair $(A,B)$, where $A, B \in 2^X$, also denoted 
$A \to B$, is an implication if:
\begin{equation}
    B \subseteq \varphi(A)=\bigcap \{ Y \colon A \subseteq Y, \  Y \in F\} 
\end{equation}

\noindent 

A set of implications fully defining closure operator $\varphi$ is often referred to as \emph{implicational basis} of a closure system. The recent exposition on implicational bases is given in Adaricheva and Nation \cite{AN16I}.

\noindent  Because implications are very useful for representing geometries, they were generated for each geometry in addition to its convex sets. However, some implications are redundant with respect to others. For example, $ab \to c$ is redundant if $a \to c$. Hence work was also done in attempt to remove such redundancies. 

In practice we only removed pairwise redundancies, but an \emph{optimum} implication list could be found using a brute force approach (however, the problem of finding such an optimum implication list for general closure system  is 
NP-hard, see \cite{ADS,MW}). The pairwise reduction was achieved by declaring implication $A \to B$ redundant if there exists an implication $C \to D$ such that $C \subseteq A$ and $B \subseteq D$.

To obtain a list of only convex geometries with convex dimension less than or equal to 5, as per Problem 1 in \cite{AdBo19}, calculation of each geometry's dimension was also necessary. As described in Section \ref{cdim}, one may compute convex dimension by starting with the meet-irreducible convex sets, which can be identified by the convex sets $Y$ that have only one  convex superset $Z$ such that $|Z\setminus Y|=1$. From there, one can build a maximal antichain of said irreducibles using recursive techniques. And by Theorem \ref{thm:cdim}, the convex dimension is the cardinality of this maximal antichain.

\section{Representing the list}\label{conquer}

The methods described in section \ref{code} were used to generate 34 distinct geometries on a set of 4 elements and 672 distinct geometries on a set of 5 elements. As described in that section, a convex geometry on $n$ elements can be represented by $n$ circles on the plane, with implication corresponding to containment within convex hulls, see more details in subsection \ref{check}. 
We are interested in the representability of each of the geometries one generates in this manner. To wit, we want to establish for each of the 34 4-element  geometries a representation using 4 circles on the plane; similarly, we want to establish for each of 672 5-element geometries a representation using 5 circles on the plane.

Representations of all 34 4-element geometries are presented in Appendix B1. However, we found that some of the 5-element geometries resisted representation. In particular, of 672 geometries generated, we constructed 623 accurate representations, which are given in Appendix B2. Additionally, 7 of the remaining geometries we found to be provably non-representable. 

In this section we address the techniques used to produce representations in Appendices B1-2.

\subsection{Representation of geometries on 5-element set based on representations on 4-element or 3-element set}\label{chunks}

In the process of representing the 672 geometries on a 5-element base set, we found that the representations of those with a unique atom (${a},{b},{c},{d}$, or ${e}$) and those with a unique coatom (${abcd},{abce},{abde}$, or ${bcde}$) could be generated using our existing representations of geometries on a 4-element base set. The former were generated by creating a non-empty intersection between the four elements and representing the fifth element with a point contained in this intersection, and the latter by representing the fifth element with a circle that completely contained the other four elements. We used a coded search function to generate lists of geometries with a unique atom and a unique coatom from our master list, to facilitate the identification of the representations we created.

We generated other groups of 5-element representations by doubling circles in the 4-element representations and slightly off-setting them, or by placing a fifth element in just one of the existing four. To identify these representations in the master list, a program was created to search for geometries isomorphic to the input.

\subsection{Checking correctness of representations}\label{check}
In this section we describe the method of checking that a convex geometry given by its implications, as in Appendices A1-2, is isomorphic to a geometry represented by circles on the plane. 

We recall from section \ref{code} that a closure operator $\varphi$ can be recorded in the set $\Sigma$ of implications, where $Y\to Z$ means $Z\subseteq \varphi(Y)$. Then the corresponding alignment of $\varphi$-closed sets comprises all subsets of $X$ that \emph{respect} all implications in $\Sigma$: if $(Y\to Z)\in \Sigma$ and $Y\subseteq W$ for set $W$ in alignment, this would imply $Z\subseteq W$.

Convex geometries on our list were given by both an alignment $\mathcal{F}$ on set $X$ and a set of implications $\Sigma$. In order to produce a representation by circles, each element of $X$ was represented by a circle, and the positioning of circles in the plane would define a closure operator $ch_c$, the convex hull operator for circles. A geometry generated by these circles would be isomorphic to $(X,\mathcal{F})$ if the alignments match, i.e., if the alignment $\mathcal{G}$ generated by $ch_c$ consists of the same subsets as $\mathcal{F}$. The following two statements formed the basis for such matching.
        
\begin{proposition}
Let $(X,\mathcal{F})$ be  an alignment defined by a set of implications $\Sigma$. Let $\psi$ be any closure operator defined on $X$ and the corresponding alignment of $\psi$-closed sets be $\mathcal{G}$. If for every $(Y\to Z)\in \Sigma$ one has $Z\subseteq \psi(Y)$, then $\mathcal{G}\subseteq \mathcal{F}$. 
\end{proposition}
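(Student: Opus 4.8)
The plan is to show that every $\psi$-closed set is $\varphi$-closed, which gives $\mathcal{G}\subseteq\mathcal{F}$ since both alignments are precisely their families of closed sets. The cleanest route is to work with the implicational description of $\mathcal{F}$ rather than with $\varphi$ directly: recall from section \ref{check} that a subset $W\subseteq X$ lies in $\mathcal{F}$ if and only if $W$ \emph{respects} every implication in $\Sigma$, meaning that whenever $(Y\to Z)\in\Sigma$ and $Y\subseteq W$, we also have $Z\subseteq W$.

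First I would take an arbitrary $W\in\mathcal{G}$, so that $W$ is $\psi$-closed, i.e. $\psi(W)=W$. The goal is to verify that $W$ respects each implication of $\Sigma$. So I fix an implication $(Y\to Z)\in\Sigma$ with $Y\subseteq W$ and aim to deduce $Z\subseteq W$. Here is where the hypothesis enters: by assumption, $Z\subseteq\psi(Y)$ for this implication. Combining $Y\subseteq W$ with monotonicity of the closure operator $\psi$ (property 2 of Definition \ref{def:closure}) gives $\psi(Y)\subseteq\psi(W)$, and since $W$ is $\psi$-closed we have $\psi(W)=W$. Chaining these inclusions yields
\[
Z\subseteq\psi(Y)\subseteq\psi(W)=W,
\]
which is exactly the condition that $W$ respects $(Y\to Z)$. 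As the implication was arbitrary, $W$ respects all of $\Sigma$, hence $W\in\mathcal{F}$.

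This establishes the set inclusion $\mathcal{G}\subseteq\mathcal{F}$, completing the argument. I do not anticipate a genuine obstacle: the proof is a short diagram chase through monotonicity and idempotence of the closure operator together with the implication-respecting characterization of $\mathcal{F}$. The one point worth stating carefully is the equivalence between ``$W$ is closed'' and ``$W$ respects every implication in $\Sigma$''; this is implicitly used in both directions and is justified by the duality between closure operators and alignments recalled in Definition \ref{def:alignment} and the discussion of implicational bases in section \ref{code}. If one preferred to avoid the implication language entirely, the same conclusion follows by noting that the hypothesis forces $\varphi\le\psi$ pointwise on the subsets generated by $\Sigma$, so every $\psi$-closed set is $\varphi$-closed; but routing through the ``respects'' condition keeps the bookkeeping minimal.
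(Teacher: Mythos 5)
Your proof is correct and is essentially the paper's own argument: take $W\in\mathcal{G}$, fix $(Y\to Z)\in\Sigma$ with $Y\subseteq W$, and conclude $Z\subseteq\psi(Y)\subseteq\psi(W)=W$, so $W$ respects $\Sigma$ and lies in $\mathcal{F}$. You merely make explicit the monotonicity and closedness steps that the paper leaves implicit.
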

\begin{proof}
Indeed, take any set $W \in \mathcal{G}$. We need to show that $W \in \mathcal{F}$, i.e., it respects all implications from $\Sigma$. So take $Y\to Z$ in $\Sigma$ and let $Y\subseteq W$. Since by assumption we have $Z\subseteq \psi(Y)$, and $W$ is $\psi$-closed, we have $Z\subseteq W$ as desired. 
\end{proof}
\noindent Concretely, we would check that for each implication $Y\to Z$ from the set of implications defining the given convex geometry, all circles in $Z$ are in the convex hull of circles from $Y$.
        
\begin{proposition}
If all meet-irreducible sets from alignment $\mathcal{F}$ belong to $\mathcal{G}$, then $\mathcal{F}\subseteq \mathcal{G}$.
\end{proposition}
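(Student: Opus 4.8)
The plan is to prove the contrapositive-free direction directly: assuming every meet-irreducible of $\mathcal{F}$ lies in $\mathcal{G}$, I will show that an arbitrary set $W \in \mathcal{F}$ must also lie in $\mathcal{G}$. The key structural fact I intend to exploit is that in any finite alignment, every closed set is the intersection of the meet-irreducibles lying above it. Indeed, if $W \in \mathcal{F}$ is \emph{not} itself meet-irreducible, then by definition $W = \bigcap_{i \in I} Y_i$ for some $Y_i \in \mathcal{F}$ with each $Y_i \neq W$; iterating this decomposition on any $Y_i$ that is still reducible, and using finiteness of $X$ to guarantee the process terminates, expresses $W$ as an intersection of meet-irreducibles of $\mathcal{F}$.

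First I would make this decomposition precise: I claim that for every $W \in \mathcal{F}$ there is a family $\{M_j\}_{j \in J}$ of meet-irreducible members of $\mathcal{F}$ with $W = \bigcap_{j \in J} M_j$. The cleanest way to see this is to set $M(W) = \{ M \in \mathcal{F} : M \text{ meet-irreducible},\ W \subseteq M \}$ and argue that $W = \bigcap_{M \in M(W)} M$. The inclusion $W \subseteq \bigcap_{M \in M(W)} M$ is immediate. For the reverse inclusion, suppose some $x \notin W$; I would produce a meet-irreducible $M \in \mathcal{F}$ with $W \subseteq M$ and $x \notin M$, which shows $x \notin \bigcap_{M \in M(W)} M$. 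Such an $M$ is obtained by taking a closed set that is maximal among those containing $W$ but avoiding $x$ (this exists by finiteness), and checking that maximality forces meet-irreducibility.

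Second, with the decomposition $W = \bigcap_{j \in J} M_j$ in hand, the conclusion follows quickly. By hypothesis each meet-irreducible $M_j$ belongs to $\mathcal{G}$. Since $\mathcal{G}$ is the alignment of $\psi$-closed sets for some closure operator $\psi$, it is closed under arbitrary intersections (this is Definition \ref{def:alignment} property 2, applied repeatedly, together with the fact that alignments are closed under finite intersections). Hence $W = \bigcap_{j \in J} M_j \in \mathcal{G}$. As $W \in \mathcal{F}$ was arbitrary, this establishes $\mathcal{F} \subseteq \mathcal{G}$.

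I expect the main obstacle to be the maximal-set-is-meet-irreducible step in the decomposition lemma, since it requires unwinding the definition of meet-irreducible given in the excerpt. Concretely, given a closed $M$ maximal with respect to $W \subseteq M$ and $x \notin M$, I must exhibit the witness $Z \in \mathcal{F}$ required by the definition: any closed proper superset of $M$ must contain $x$ (by maximality), and I would argue that the intersection of all such proper superclosures is the unique minimal one $Z$ with $M \subsetneq Z$, giving meet-irreducibility. Everything else---the termination of the decomposition and the intersection-closure of $\mathcal{G}$---is routine and follows directly from finiteness and the alignment axioms.
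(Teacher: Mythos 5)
Your proof is correct and follows essentially the same route as the paper: the paper's argument is simply ``every set in $\mathcal{F}$ is the intersection of meet-irreducible sets, hence belongs to $\mathcal{G}$,'' taking the decomposition fact for granted. You supply the standard proof of that fact (maximal closed set avoiding a point is meet-irreducible), which the paper omits, so your write-up is a more detailed version of the same argument.
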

\begin{proof}
Indeed, every set in $\mathcal{F}$ is the intersection of meet-irreducible sets. Therefore, they all belong to $\mathcal{G}$.
\end{proof}
\noindent In practice, we would check that all circles belonging to the meet-irreducible set of a given convex geometry form a convex set of circles.
\noindent Combining the two practical procedures following from the propositions above, we could conclude that the given convex geometry is isomorphic to our convex geometry of circles.
\\

\subsection{GeoGebra as a toolkit for representations}\label{Geo}
At the beginning of our work, creating a representation for a geometry entailed drawing out the circles by hand, and then manually representing the drawing in LaTeX. One had to take care to preserve the convex hulls as precisely as possible and not generate unwanted implications. 

This became especially cumbersome on certain geometries that were possible but required a very precarious setup, where even the slightest shift in the position of a circle would make the representation incorrect. We started using the software
GeoGebra as a tool for representing Geometries, which allowed for the easy rearrangement of circles and points to one's liking. Eventually we 
came up with a template GeoGebra file \cite{ggb} that allowed users to adjust not only the position and radii of the circles, but also to draw the convex hulls of all possible combinations of circles. This made it easier to check if a representation was actually correct or not. 

Using GeoGebra helped in checking if a representation of a geometry was correct, but once represented, they still had to be remade in \LaTeX using the TikZ package. It was soon discovered that GeoGebra had a built in way to export images to TikZ code, and we utilized and enhanced this code
to generate the desired representation.

\subsection{Google Colab toolkit}\label{Google}
In order to streamline the process of exporting representations of geometries to \LaTeX, we generated
a Python script to automatically generate TikZ code to transfer to \LaTeX.
The centers and radii of the circles were shown in GeoGebra, so the user only had to copy those numbers into the script. 

Simultaneously, we created another 
script in a Google Colab notebook that would parse through the TikZ code generated by GeoGebra, extract all of the necessary information (such as circle location and radii) and then output useable TikZ code. This new code, however, had issues with normalizing the size of the circles to fit nicely on the page, and so features of both scripts were combined in the Google Colab notebook. The end result was code that would accept a text file from GeoGebra and reliably output useable TikZ code for \LaTeX \cite{colab}.

\section{Triangle Property and Impossible geometries}\label{math}

In this section we will be proving that several geometries on a 5 element set are not representable by circles on the plane.
\subsection{Tight implication on circles}
The following statement is a modification of Lemma 5.1 in \cite{AdBo19}.

\begin{lemma}\label{3 circles}
Let $a,b,c$ be three circles on the plane, where none is inside another. 
Then there are three different types of configurations of three circles $a,b,c$, up to re-labeling:

\begin{itemize}
    \item[(1)] $\tilde{b}\subseteq \CH(\tilde{a}\cup\tilde{c})$;
    \item[(2)] $\CH(\tilde{a}\cup\tilde{b}\cup\tilde{c}) = \CH(\tilde{a}\cup\tilde{b})\cup\CH(\tilde{b}\cup\tilde{c})$
    as in Figures \ref{fig:WeakCarousel2} or \ref{fig:WeakCarousel2Case};
    \item[(3)] $\CH(\tilde{a}\cup\tilde{b}\cup\tilde{c})$
    is inscribed into a triangle as in Figure \ref{fig:WeakCarousel3}, or forms a shape as in Figure \ref{fig:WeakCarousel4}.
\end{itemize}
\end{lemma}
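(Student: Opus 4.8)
The plan is to analyze the boundary of the convex hull $K=\CH(\tilde a\cup\tilde b\cup\tilde c)$ directly. First I would record the basic structure: for two circles with neither inside the other, $\CH(\tilde x\cup\tilde y)$ is a ``stadium'' bounded by the two outer arcs of $x,y$ together with the two external common tangent segments; and for finitely many disks the boundary of the convex hull is a cyclic concatenation of circular arcs (pieces of the given circles) joined by straight segments lying on external common tangents of consecutive circles. The engine behind this is the supporting-line description: for a unit outward normal $u_\theta=(\cos\theta,\sin\theta)$, the line of support of $K$ in direction $u_\theta$ rests on the disk maximizing $g_x(\theta)=\langle C_x,u_\theta\rangle+r_x$, where $C_x,r_x$ denote the center and radius of circle $x$. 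Each $g_x$ is a shifted cosine plus a constant, so each difference $g_x-g_y$ is again a shifted cosine plus a constant and changes sign at most twice; this controls how the ``winning'' disk changes as $\theta$ runs over $[0,2\pi)$ and pins down the arc/tangent structure of $\partial K$.

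Next I would split on redundancy. I claim at most one of the three disks can fail to meet $\partial K$: if two of them, say $b$ and $c$, stayed in the interior, then $K=\tilde a$ and both would sit inside $a$, contradicting the hypothesis that none is inside another. So either exactly one disk, say $b$, never attains the maximum (contributes no arc), in which case the upper envelope is determined by $a,c$ alone, hence $K=\CH(\tilde a\cup\tilde c)$ and $\tilde b\subseteq\CH(\tilde a\cup\tilde c)$ --- this is case (1); or all three disks appear on $\partial K$.

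In that remaining situation I would separate (2) from (3) by asking whether one circle lies ``between'' the other two. Relabel so that $b$ is a candidate middle circle, and examine the tangent segment that would join the arcs of $a$ and $c$ on the side facing $b$: either $b$ reaches or crosses this potential $ac$-edge, or it does not. If it does, the two external tangents on that side are supplied by the pairs $(a,b)$ and $(b,c)$ rather than by $(a,c)$, so $b$ is adjacent to both $a$ and $c$, and I would verify $K=\CH(\tilde a\cup\tilde b)\cup\CH(\tilde b\cup\tilde c)$: the inclusion $\supseteq$ is immediate, and for $\subseteq$ I would check that this union is convex (no reflex angle at the two junctions near $b$), so being convex and containing $a,b,c$ it must contain $K$. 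This is case (2), the configurations of Figures \ref{fig:WeakCarousel2} and \ref{fig:WeakCarousel2Case}. If instead no relabeling produces such a middle circle, then each disk contributes a single arc, the three external tangent segments $t_{ab},t_{bc},t_{ca}$ are all genuinely present and pairwise separated by arcs, and $K$ is a rounded triangle whose three straight edges lie on these tangents; extending the edges exhibits $K$ inscribed in a triangle (Figure \ref{fig:WeakCarousel3}) or, when the radii are disparate enough that the extended tangents fail to circumscribe it in that way, the shape of Figure \ref{fig:WeakCarousel4}. This is case (3).

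The main obstacle is the clean separation of cases (2) and (3) for unequal radii. With equal radii $K$ is simply the Minkowski sum of the triangle of centers with a disk, so one sees at once a rounded triangle (or, for collinear centers, a single stadium falling under case (1)); but with unequal radii a single large circle can contribute two disjoint arcs to $\partial K$, whereupon the two ``outer'' circles become non-adjacent and the honest meaning of ``being between'' is precisely that $K$ collapses to a union of two stadiums. Pinning down the threshold --- when $b$ is merely tangent to the $ac$-edge versus when it strictly separates $a$ from $c$, and checking that the union formula for case (2) holds in both --- together with distinguishing the two shapes in case (3), is where the care is needed; the supporting-line bookkeeping via the functions $g_x$ from the first step is what I would lean on to make each of these sub-claims rigorous.
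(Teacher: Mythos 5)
Two things up front: the paper never actually proves Lemma \ref{3 circles} --- it is imported (with modifications) from Lemma 5.1 of \cite{AdBo19}, and the only argument the paper supplies is the boundary classification in Lemma \ref{borders} --- so your proposal is filling a real gap rather than paralleling an in-paper proof. Your support-function framework is the right engine for doing that: since each difference $g_x(\theta)-g_y(\theta)=\langle C_x-C_y,u_\theta\rangle+(r_x-r_y)$ is a shifted cosine plus a constant, each set $\{\theta:\ g_x(\theta)\ge g_y(\theta)\}$ is a single closed arc of directions, so each circle's set of supporting directions is an intersection of two arcs and is therefore empty, a single arc, or a union of two arcs; these three outcomes correspond exactly to cases (1), (3) and (2). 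Your handling of case (1), including the observation that at most one disk can be redundant, is correct.

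The genuine gap is your criterion separating (2) from (3). You test whether $b$ reaches or crosses the potential $ac$-edge \emph{on the side facing $b$}, i.e.\ one of the two outer common tangent lines of $a$ and $c$, and claim this yields $K=\CH(\tilde a\cup\tilde b)\cup\CH(\tilde b\cup\tilde c)$. That is false. Take $a,c$ to be unit circles centered at $(0,0)$ and $(10,0)$ and $b$ a unit circle centered at $(5,\,0.5)$: then $b$ crosses the upper tangent line $y=1$ but not the lower one $y=-1$, so your test puts this configuration in case (2); yet the point $(5,-0.9)$ lies in $K$ (above the bottom edge $y=-1$ of $K$) but below the lower boundaries of both stadiums (which at $x=5$ sit near $y\approx-0.5$), so the union is a proper, non-convex subset of $K$ and your convexity verification fails. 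This configuration in fact belongs to case (3) --- three tangent segments, three arcs, inscribed in the triangle formed by the extended tangent lines --- but your case-(3) branch (``no relabeling produces a middle circle'') excludes it, so your trichotomy as organized is not exhaustive. The correct threshold, which the paper records in the proof of Lemma \ref{borders} and which you yourself half-state in your final paragraph, is that case (2) occurs precisely when $b$ crosses \emph{both} outer common tangent lines of $a$ and $c$, equivalently contributes two disjoint arcs to $\partial K$, equivalently has a disconnected winning set in your support-function picture. If you restate the dichotomy with that criterion, your union-is-convex verification does go through in the case-(2) regime, and the rest of your plan stands.
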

    
   \begin{multicols}{2}
   \begin{figure}[H]\centering
\includegraphics[width=0.33\textwidth]{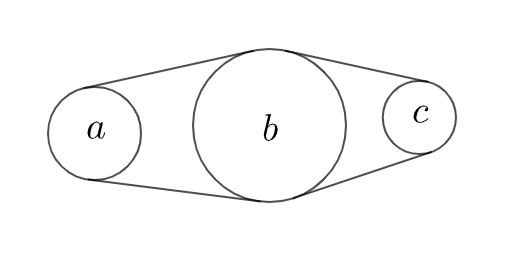}  
\caption{ }
  \label{fig:WeakCarousel2}
\end{figure}

\columnbreak

   \begin{figure}[H]\centering
\includegraphics[width=0.3\textwidth]{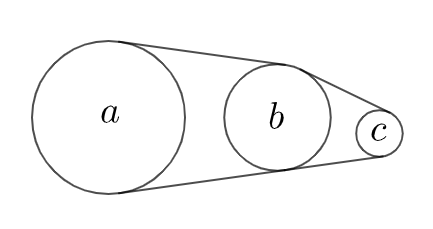} \caption{ }
  \label{fig:WeakCarousel2Case}
\end{figure}

\end{multicols}

\begin{multicols}{2}
   \begin{figure}[H]\centering
\includegraphics[width=0.175\textwidth]{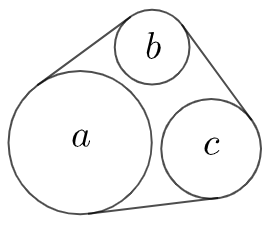}  
\caption{ }
  \label{fig:WeakCarousel3}
\end{figure}

 \columnbreak
 
   \begin{figure}[H]\centering
\includegraphics[width=0.145\textwidth]{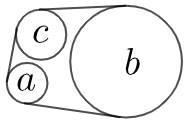}  
\caption{ }
  \label{fig:WeakCarousel4}
\end{figure}
\end{multicols}
We want to reformulate this classification of three circle configurations in the terms of the boundaries of convex hulls. This allows to avoid an overlap between cases (2) and (3) of Lemma \ref{3 circles}.
\begin{lemma}\label{borders}
The boundary of 
$\CH(\tilde{a}\cup\tilde{b}\cup\tilde{c})$
for any three circles $a,b,c$, none of which includes another, comprises
\end{lemma}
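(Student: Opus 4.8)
The plan is to read off the boundary of $K=\CH(\tilde a\cup\tilde b\cup\tilde c)$ from the rotating supporting line of this convex body. Writing $o_a,o_b,o_c$ for the centers and $r_a,r_b,r_c$ for the radii, I would first record that the support function of a union is the pointwise maximum of the support functions, and that the support function of a disk with center $o$ and radius $r$ in a unit direction $u\in S^1$ is $\langle o,u\rangle+r$. Hence
\[
h_K(u)=\max\bigl\{\langle o_a,u\rangle+r_a,\ \langle o_b,u\rangle+r_b,\ \langle o_c,u\rangle+r_c\bigr\}.
\]
For each $u$ the supporting line of $K$ with outer normal $u$ meets $K$ in the face of $K$ in that direction, and I would analyse $\partial K$ by letting $u$ make one full turn around $S^1$.

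The second step is to describe the two kinds of faces. If a single disk, say the one of index $i$, strictly attains the maximum in direction $u$, then the face is the single point $o_i+r_iu$, which lies on the circle $i$; as $u$ runs over an arc of directions on which $i$ is the strict maximizer, this point sweeps out an arc of $i$. If two disks $i,j$ tie for the maximum at some direction $u^\star$, then the supporting line is tangent to both circles simultaneously, so it is a common outer tangent of $i$ and $j$, and the face of $K$ is the segment joining the tangency points $o_i+r_iu^\star$ and $o_j+r_ju^\star$; this segment lies on that tangent line and is a piece of $\partial K$. I would conclude that $\partial K$ is a cyclic concatenation of circular arcs (each a sub-arc of one of $a,b,c$) separated by straight segments, each segment lying on a common outer tangent of the two circles whose arcs it joins.

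The third step handles finiteness, alternation, and the count. Parametrising $u=(\cos\theta,\sin\theta)$, each $g_i(\theta)=\langle o_i,u\rangle+r_i$ is a single sinusoid of unit frequency, so any difference $g_i-g_j=\langle o_i-o_j,u\rangle+(r_i-r_j)$ is again such a sinusoid and the dominance region $\{\,u:g_i\ge g_j\,\}$ is the intersection of $S^1$ with a half-plane, i.e.\ a single closed arc. Since no circle contains another, no single $g_i$ dominates the other two everywhere, so the upper envelope switches maximizer finitely often and at least two circles contribute arcs; this already yields the alternating arc/segment structure and separates case (1) (exactly two contributing circles) from the three-circle situation. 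To finish the classification I would show each circle contributes \emph{at most one} arc: the set on which $i$ is the maximizer is $\{g_i\ge g_j\}\cap\{g_i\ge g_k\}$, the intersection of $S^1$ with two half-planes, which when the three centers are not collinear is a convex wedge of angle less than $\pi$ and so meets $S^1$ in a single arc. Reformulating the trichotomy of Lemma \ref{3 circles} in terms of how many arcs appear and which pairs of circles share a tangent segment then produces mutually exclusive cases, which is exactly what is needed to remove the overlap between cases (2) and (3).

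The main obstacle is the degenerate configuration in which the two half-planes defining a dominance region have parallel boundaries; this occurs precisely when the three centers are collinear, where the intersection with $S^1$ can split into two arcs, threatening the ``each circle once'' conclusion and corresponding geometrically to the in-a-row configurations of Figures \ref{fig:WeakCarousel2} and \ref{fig:WeakCarousel2Case}. I would dispatch this by treating the collinear arrangement separately, and by using the non-inclusion hypothesis to rule out the sub-cases in which a tangency ``segment'' collapses to a point (externally tangent circles) or a circle fails to reach the boundary at all. Once these boundary cases are settled, the enumeration of arcs and tangent segments is complete and the reformulated lemma follows.
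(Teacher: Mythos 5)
Your support-function framework is sound (and genuinely different from the paper's proof, which is a short geometric case analysis keyed directly to the three configurations of Lemma \ref{3 circles}), but the crucial counting step is false. The dominance region of circle $i$ is the intersection of $S^1$ with the two \emph{affine} half-planes $\{u:\langle o_i-o_j,u\rangle\ge r_j-r_i\}$ and $\{u:\langle o_i-o_k,u\rangle\ge r_k-r_i\}$; their boundary lines pass through the origin only when the radii are equal, so when they are not parallel their intersection is a wedge whose apex is in general far from the origin. Such a wedge can cut across $S^1$ like a strip and meet it in \emph{two} arcs even though its opening angle is less than $\pi$. Concretely, take $a$ centered at $(-5,0)$ and $c$ at $(5,0)$, both of radius $1$, and $b$ at $(0,0.1)$ of radius $2$: the centers are not collinear, yet the wedge for $b$ has apex $(0,-10)$, contains $(0,\pm 1)$ but not $(\pm 1,0)$, and hence meets $S^1$ in two disjoint arcs. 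Geometrically this is exactly configuration (2) of Lemma \ref{3 circles} (Figure \ref{fig:WeakCarousel2}): the middle circle crosses both common outer tangents of the other two, contributes one arc on top and one on the bottom, and the boundary has four arcs and four tangent segments --- precisely case (II) of the lemma you are proving.

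So the failure is not a measure-zero degeneracy that can be quarantined with the collinear case: configuration (2) is an open set of center/radius data, whereas parallel boundaries of the two half-planes occur only on the collinear locus. Your claim that non-collinear centers force each circle to contribute at most one arc would cap the boundary at three arcs and three segments, flatly contradicting case (II); conversely, your proposed repair (``treat the collinear arrangement separately'') repairs nothing, since the two-arc phenomenon persists far from collinearity. To fix the argument you must replace the single-arc claim by a correct dichotomy --- for instance, circle $i$ meets $\partial K$ in two arcs precisely when it crosses both common outer tangent lines of the other two circles, i.e.\ in configuration (2), and in at most one arc otherwise --- and then recount the tangent segments in each case. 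At that point you will have essentially reproduced, inside the envelope formalism, the geometric case distinctions that constitute the paper's own proof.
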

\begin{itemize}
    \item [(I)] two tangent segments and two arcs of circles, when configuration (1) from Lemma \ref{3 circles} occurs;
    \item[(II)] four tangent segments and four arcs, as in Figure \ref{fig:WeakCarousel2} or \ref{fig:WeakCarousel2Case}, for most configurations in (2) from Lemma \ref{3 circles}; 
    \item[(III)] three tangent segments and three arcs, as in Figure \ref{fig:WeakCarousel3} and Figure \ref{fig:WeakCarousel4}, for configurations in (3) from Lemma \ref{3 circles}, or, in an extreme case of configuration (2).
\end{itemize}
\begin{proof}

Case (1) is distinguished from others by the property that  
one of circles does not have an arc on the boundary of $\CH(\tilde{a}\cup\tilde{b}\cup\tilde{c})$.

In case (2) one of the circles crosses both outer tangent lines of two other circles and 
has two points of intersection with each of those tangent lines.

In case (3), each circle is contained in one closed semiplane formed by one of outer tangent lines of two others. Therefore, a segment of that tangent line will be a part of the boundary of 
$\CH(\tilde{a}\cup\tilde{b}\cup\tilde{c})$. The configuration in Figure \ref{fig:WeakCarousel4} is distinguished by having an arc corresponding to central angle larger than $\pi$ from one of the circles as a part of the border of 
$\CH(\tilde{a}\cup\tilde{b}\cup\tilde{c})$. 
\end{proof}

\begin{definition}
An implication $Y\to u$, $Y\subseteq X, u\in X$, is called \emph{tight}, if implications $(Y\setminus z) \to u$ do not hold, for all $z \in Y$.
\end{definition}

\begin{lemma}\label{non-tight}
For an implication $Y\to u$, $Y\subseteq X, u\in X$, if there exists $\ y\in Y$ such that $(Y\setminus y) \to y$, then implication $Y\to u$ cannot be tight.
\begin{proof}
Let $\varphi$ be the closure operator. If there exists $ \ y\in Y$ such that $(Y\setminus y) \to y$, then $y \in \varphi((Y\setminus y))$. Also since for any $A, \ A \subseteq \varphi(A)$, we have $Y \subseteq \varphi((Y\setminus y))$. Now applying the closure operator to both sides, $\varphi(Y) \subseteq \varphi((Y\setminus y))$, as $\varphi(\varphi(A)) = \varphi(A)$. Since $Y\to u$, $u \in \varphi(Y)$ and $u \in \varphi((Y\setminus y))$. Thus $(Y\setminus y) \to u$, and hence $Y\to u$ cannot be tight. 
\end{proof}
\end{lemma}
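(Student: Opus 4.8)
Looking at the statement of Lemma \ref{non-tight}, the claim is that if some element $y \in Y$ satisfies $(Y \setminus y) \to y$, then the implication $Y \to u$ cannot be tight.

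The plan is to work directly from the definitions of tightness and of a closure operator. Recall that $Y \to u$ is tight precisely when \emph{none} of the weakened implications $(Y \setminus z) \to u$ holds, for any $z \in Y$. So to show $Y \to u$ is not tight, it suffices to exhibit a single witness $z \in Y$ for which $(Y \setminus z) \to u$ \emph{does} hold. The natural candidate for this witness is the very element $y$ supplied by the hypothesis, since $(Y \setminus y) \to y$ already tells us that dropping $y$ from $Y$ does not lose information about $y$ itself.

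First I would translate the hypothesis $(Y \setminus y) \to y$ into closure-operator language: it says $y \in \varphi(Y \setminus y)$. Combining this with the extensive property $Y \setminus y \subseteq \varphi(Y \setminus y)$ (property (1) of Definition \ref{def:closure}), I get that the entire set $Y = (Y \setminus y) \cup \{y\}$ is contained in $\varphi(Y \setminus y)$. Next I would apply monotonicity (property (2)) and idempotency (property (3)): from $Y \subseteq \varphi(Y \setminus y)$ it follows that $\varphi(Y) \subseteq \varphi(\varphi(Y \setminus y)) = \varphi(Y \setminus y)$. Finally, since $Y \to u$ means $u \in \varphi(Y)$, the inclusion just established gives $u \in \varphi(Y \setminus y)$, which is exactly the statement $(Y \setminus y) \to u$. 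Having produced a valid weakened implication, I conclude by the definition that $Y \to u$ is not tight.

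There is no serious obstacle here — the entire argument is a short chain of applications of the three closure-operator axioms, with the only genuine idea being the recognition that the element $y$ from the hypothesis is precisely the witness needed to defeat tightness. The one point worth stating carefully is the step $\varphi(Y) \subseteq \varphi(Y \setminus y)$, so that the reader sees both monotonicity and idempotency being invoked in sequence; everything else is bookkeeping with set containments.
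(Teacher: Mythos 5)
Your proposal is correct and follows essentially the same argument as the paper's own proof: translate the hypothesis into $y \in \varphi(Y\setminus y)$, use extensivity to get $Y \subseteq \varphi(Y\setminus y)$, then monotonicity and idempotency to conclude $\varphi(Y) \subseteq \varphi(Y\setminus y)$, so that $u \in \varphi(Y\setminus y)$ witnesses the failure of tightness. The only difference is presentational — you make the monotonicity-plus-idempotency step slightly more explicit than the paper does.
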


\begin{lemma}\label{no12}
If $abc \to e$ is any  tight implication  that holds for circles $a,b,c,e$, then $a,b,c$ cannot form a configuration in (1) or (2) of Lemma \ref{3 circles}, for any labeling of circles by 
$a$, $b$, $c$.
\end{lemma}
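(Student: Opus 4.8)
The plan is to show that in both configuration (1) and configuration (2), the implication $abc \to e$ can be ``simplified'' so that $e$ is already forced by a proper subset of $\{a,b,c\}$, contradicting tightness. The central idea is that in these two configurations, the convex hull $\CH(\tilde{a}\cup\tilde{b}\cup\tilde{c})$ coincides with a convex hull of fewer circles, so any circle contained in the former is automatically contained in the latter.

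First I would dispose of configuration (1). By the defining property of case (1) in Lemma \ref{3 circles}, one of the circles---say $b$, after relabeling---satisfies $\tilde{b}\subseteq \CH(\tilde{a}\cup\tilde{c})$. Monotonicity of the convex hull then gives
\[
\CH(\tilde{a}\cup\tilde{b}\cup\tilde{c}) = \CH(\tilde{a}\cup\tilde{c}).
\]
Hence if $abc \to e$ holds, meaning $\tilde{e}\subseteq \CH(\tilde{a}\cup\tilde{b}\cup\tilde{c})$, then $\tilde{e}\subseteq \CH(\tilde{a}\cup\tilde{c})$, so $ac\to e$ already holds. This is exactly the failure of tightness witnessed by dropping $z=b$, so configuration (1) is impossible for a tight implication.

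Next I would handle configuration (2). Here Lemma \ref{3 circles}(2) gives, after relabeling,
\[
\CH(\tilde{a}\cup\tilde{b}\cup\tilde{c}) = \CH(\tilde{a}\cup\tilde{b})\cup\CH(\tilde{b}\cup\tilde{c}).
\]
The subtlety is that the right-hand side is a union, not a single convex hull, so I cannot immediately drop a circle as in case (1). Instead I would argue using the observation from Lemma \ref{non-tight}: in configuration (2) the ``middle'' circle $b$ lies in the convex hull of $a$ and $c$ in the weaker sense relevant to the induced geometry. More carefully, the point is that $\tilde{e}\subseteq \CH(\tilde{a}\cup\tilde{b})\cup\CH(\tilde{b}\cup\tilde{c})$ forces $\tilde{e}$ to lie entirely in one of the two pieces, since $\tilde{e}$ is connected (a disk) and the two hulls $\CH(\tilde{a}\cup\tilde{b})$ and $\CH(\tilde{b}\cup\tilde{c})$ meet only along $\tilde{b}$ and a bounded strip through it; a connected disk cannot straddle both without being contained in the narrower region, which is itself inside one of the hulls. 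Thus either $ab\to e$ or $bc\to e$ holds, and in either case a proper subset of $\{a,b,c\}$ already implies $e$, again contradicting tightness.

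The hard part will be the connectedness argument in configuration (2): justifying rigorously that a disk $\tilde{e}$ contained in the union $\CH(\tilde{a}\cup\tilde{b})\cup\CH(\tilde{b}\cup\tilde{c})$ must in fact sit inside one of the two hulls. This requires understanding the geometry of how the two hulls overlap---they share $\tilde{b}$ and the region near it---and ruling out the possibility that $\tilde{e}$ uses parts of both pieces on opposite sides of $b$. I expect the cleanest route is to invoke Lemma \ref{borders}(II), which describes the boundary of $\CH(\tilde{a}\cup\tilde{b}\cup\tilde{c})$ in configuration (2) as four tangent segments and four arcs, including two arcs from $b$; this boundary structure shows that $b$ contributes arcs on \emph{both} outer sides, so that any other circle fitting inside is pinned against one of the two convex ``lobes'' $\CH(\tilde{a}\cup\tilde{b})$ or $\CH(\tilde{b}\cup\tilde{c})$, and hence $e$ is captured by a two-element subset.
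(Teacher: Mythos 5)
Your overall strategy is the same as the paper's: kill configuration (1) by showing a two-element subset already implies $e$, and kill configuration (2) by the claim that a circle inside $\CH(\tilde{a}\cup\tilde{b})\cup\CH(\tilde{b}\cup\tilde{c})$ must lie in one of the two hulls. Your configuration (1) argument is correct and equivalent to the paper's: the paper notes that configuration (1) gives $ac\to b$ and invokes Lemma \ref{non-tight}, while you observe directly that $\CH(\tilde{a}\cup\tilde{b}\cup\tilde{c})=\CH(\tilde{a}\cup\tilde{c})$ and hence $ac\to e$; these are the same computation.

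The gap is in configuration (2). Your justification of the key claim rests on connectedness of $\tilde{e}$ (``a connected disk cannot straddle both''), and connectedness cannot be the operative property. Configuration (2) says precisely that the union $\CH(\tilde{a}\cup\tilde{b})\cup\CH(\tilde{b}\cup\tilde{c})$ equals $\CH(\tilde{a}\cup\tilde{b}\cup\tilde{c})$ and so is itself \emph{convex}; consequently there is no topological obstruction at all to a connected --- even convex --- set meeting both $\CH(\tilde{a}\cup\tilde{b})\setminus\CH(\tilde{b}\cup\tilde{c})$ and $\CH(\tilde{b}\cup\tilde{c})\setminus\CH(\tilde{a}\cup\tilde{b})$: any line segment from a point of the first set to a point of the second lies entirely in the union (by convexity of the union) yet in neither hull alone; concretely, a thin sliver hugging the upper boundary from over $a$, across the top arc of $b$, to over $c$ does this. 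That is exactly why the paper's proof adds the caveat that the claim ``may not be true for a shape $e$ different from circle.'' So a correct argument must exploit roundness, not connectedness --- for instance a comparison of the radius of $e$ against the tangent segments and arcs of $b$ separating the two lobes, in the spirit of the radius estimates used in the proof of Lemma \ref{triangle}. Your closing appeal to the boundary description in Lemma \ref{borders}(II) points in the right direction but does not yet supply such an argument; also, the description of the overlap as ``$\tilde{b}$ and a bounded strip through it'' is inaccurate, since $\CH(\tilde{a}\cup\tilde{b})\cap\CH(\tilde{b}\cup\tilde{c})$ is a convex region that can be much larger than $\tilde{b}$. (To be fair, the paper itself records the configuration-(2) claim as an observation rather than proving it, but your sketch replaces that missing proof with reasoning that is demonstrably invalid.)
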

\begin{proof}
Indeed, if $a,b,c$ form a configuration in (1), then the implication $abc \to e$ cannot be tight due to Lemma \ref{non-tight}.
For configuration (2) 
we note that for any circle $e$ in $\ch_c(a, b, c)$ we have either $e \in \ch_c(a, b)$ or $e \in \ch_c(b, c)$. Note that the same may not be true for a shape $e$ different from circle.
\end{proof}

\begin{corollary}\label{tight}
If $abc \to e$ is any  tight implication  that holds for circles $a,b,c,e$, then the boundary of 
$\CH(\tilde{a}\cup\tilde{b}\cup\tilde{c})$ 
comprises three tangent segments and three arcs. 
\end{corollary}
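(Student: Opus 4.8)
The plan is to treat this corollary as a short case-elimination argument that chains together the three preceding results, rather than as an independent geometric computation. Since $abc \to e$ is assumed tight, Lemma \ref{no12} immediately prohibits $a,b,c$ from forming a configuration of type (1) or type (2) of Lemma \ref{3 circles}, under every labeling. Because Lemma \ref{3 circles} asserts that any three circles, none inside another, must realize exactly one of the three configuration types up to relabeling, eliminating (1) and (2) forces $a,b,c$ into configuration (3). The boundary reformulation in Lemma \ref{borders} then reads off the desired conclusion.

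Concretely, I would proceed in three steps. First, invoke Lemma \ref{no12} to rule out (1) and (2): type (1) would make a circle on the boundary redundant, contradicting tightness via Lemma \ref{non-tight}, and in type (2) every circle in $\ch_c(a,b,c)$ already lies in $\ch_c(a,b)$ or $\ch_c(b,c)$, so again the premise could be shrunk and the implication would not be tight. Second, apply the exhaustiveness of Lemma \ref{3 circles} to conclude that $a,b,c$ must sit in configuration (3). Third, apply the boundary classification of Lemma \ref{borders}: configuration (3) falls under case (III), whose boundary of $\CH(\tilde{a}\cup\tilde{b}\cup\tilde{c})$ consists of exactly three tangent segments and three arcs, which is precisely the statement of the corollary.

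The one point demanding care, and the main obstacle, is the overlap between cases (2) and (3) explicitly flagged in the remark preceding Lemma \ref{borders}: the boundary structure (III) is realized both by genuine configuration (3) and by the extreme limiting instance of configuration (2). This is exactly why the corollary is phrased in terms of the boundary and not the configuration label. I would resolve it by noting that the forward direction is unaffected: Lemma \ref{no12} discards all of (2), including its extreme instance, so the only surviving possibility is configuration (3), and Lemma \ref{borders} guarantees that every such configuration yields case (III) regardless of how the boundary between (2) and (3) is drawn. Hence the three-segment, three-arc description is forced, and no appeal to the converse (which would indeed be ambiguous because of the overlap) is needed.
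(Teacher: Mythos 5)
Your proof is correct and takes essentially the same route as the paper's: Lemma \ref{no12} eliminates configurations (1) and (2), exhaustiveness of Lemma \ref{3 circles} forces configuration (3), and Lemma \ref{borders} then yields case (III). Your added remark about the overlap between (2) and (3) --- that only the forward direction (configuration (3) implies boundary type (III)) is needed, so the extreme case of (2) causes no trouble --- is a sound clarification, but it does not alter the argument, which the paper states in one line.
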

\begin{proof}
By Lemma \ref{no12}, only configuration (3) from Lemma \ref{3 circles} may occur, which implies only case (III) of Lemma \ref{borders} occurs.
\end{proof}

\subsection{Triangle Property}

In this section we establish a simple property that holds for circles satisfying a tight implication. This will result in establishing that several convex geometries on a 5-element set are not representable by circles.

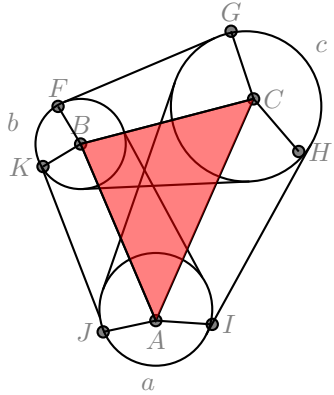
\begin{figure}[H]
    \centering
  \begin{tikzpicture}[thick,fill opacity=0.5]
    \node[draw,circle,xshift=2.2cm,yshift=.5cm,minimum size=20mm,outer sep=0] (a) {};
    \node[draw,circle,minimum size=12mm,outer sep=0] (b) {};
     \node[draw,circle,xshift=1cm,yshift=-2.2cm,minimum size=15mm,outer sep=0] (c) {};
     
     \filldraw [black, thick] (2.3,.6) circle[radius=0.075] node[anchor=west] {$C$};
     \filldraw [black, thick] (0,0) circle[radius=0.075] node[anchor=south] {$B$};
      \filldraw [black, thick] (1,-2.35) circle[radius=0.075] node[anchor=north] {$A$};
      
      \filldraw [black, thick] (2,1.5) circle[radius=0.075] node[anchor=south] {$G$};
      \filldraw [black, thick] (2.9,-0.1) circle[radius=0.075] node[anchor=west] {$H$};
      \filldraw [black, thick] (1.75,-2.4) circle[radius=0.075] node[anchor=west] {$I$};
      \filldraw [black, thick] (.3,-2.5) circle[radius=0.075] node[anchor=east] {$J$};
      \filldraw [black, thick] (-0.3,.5) circle[radius=0.075] node[anchor=south] {$F$};
      \filldraw [black, thick] (-.5,-0.3) circle[radius=0.075] node[anchor=east] {$K$};
      
      \draw (2,1.5) -- (2.3,.6cm);
      \draw (2.9,-0.1) -- (2.3,.6cm);
      \draw (1.75,-2.4) -- (1,-2.35cm);
      \draw (.3,-2.5) -- (1,-2.35cm);
      \draw (-.3,.5) -- (0,0cm);
      \draw (-.5,-.3) -- (0,0cm);
     
    \draw (tangent cs:node=b,point={(a.south)},solution=2) -- (tangent cs:node=a,point={(b.south)});
    \draw (tangent cs:node=b,point={(a.north)},solution=1) -- (tangent cs:node=a,point={(b.north)},solution=2);
    
    \draw (tangent cs:node=b,point={(c.south)},solution=2) -- (tangent cs:node=c,point={(b.south)});
    \draw (tangent cs:node=b,point={(c.north)},solution=1) -- (tangent cs:node=c,point={(b.north)},solution=2);
    
   \draw (tangent cs:node=a,point={(c.west)},solution=2) -- (tangent cs:node=c,point={(a.west)});
    \draw (tangent cs:node=a,point={(c.east)},solution=1) -- (tangent cs:node=c,point={(a.east)},solution=2);
    
      \draw[black] (3.2, 1.3)node {$c$};
      \draw[black] (-0.9, 0.3)node {$b$};
      \draw[black] (0.9, -3.2)node {$a$};
      
      \draw (2.3,.6) - - (0,0) - - (1,-2.35) - - cycle ;
      \filldraw[fill=red] (2.3,.6) - - (0,0) - - (1,-2.35) ;
      \end{tikzpicture}
      
\caption{Triangle Property}
\label{fig:CenterTriangle}
\end{figure}  

\begin{lemma}[The Triangle Property]\label{triangle}
If $a,b,c,$ and $e$ are circles in the plane with centers $A,B,C,$ and $E$ respectively, and $abc\to e$ is a tight implication, then $E$ must lie in the interior of triangle formed by $A$,$B$ and $C$.
\end{lemma}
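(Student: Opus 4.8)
The plan is to show that $E$ lies strictly on the same side of each of the three lines $AB$, $BC$, $CA$ as the opposite center; since the interior of $\triangle ABC$ is exactly the intersection of these three open half-planes, this yields $E\in\operatorname{int}\triangle ABC$. Here I use that $A,B,C$ are non-collinear, which holds in the ``inscribed-in-a-triangle'' configuration (3) of Lemma~\ref{3 circles}; and by Corollary~\ref{tight}, tightness of $abc\to e$ forces exactly this configuration, with each circle contributing a genuine arc (and three tangent segments) to the boundary of $\CH(\tilde a\cup\tilde b\cup\tilde c)$. By the symmetry of the hypotheses in $a,b,c$ it suffices to prove one of the three claims, say that $E$ lies strictly on the $C$-side of line $AB$.

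First I would introduce the two external common tangents $m^+,m^-$ of the circles $a$ and $b$, labelled so that $m^-$ is the one on the far side from $c$ (by Corollary~\ref{tight} this is the tangent segment appearing on the hull boundary). Since $m^+$ and $m^-$ are symmetric about the line of centers $AB$, the reflection $\sigma$ across $AB$ fixes the capsule $K_{ab}:=\CH(\tilde a\cup\tilde b)$ and swaps $m^+\leftrightarrow m^-$. I would then argue by contrapositive: assume $E$ lies in the closed half-plane $\overline{H^-}$ bounded by $AB$ and not containing $C$, and deduce $\tilde e\subseteq K_{ab}$. This contradicts tightness, which by definition forbids the implication $ab\to e$.

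The deduction rests on two ingredients. The first is a containment claim, that adjoining $c$ does not enlarge the hull on the far side of $AB$, i.e. $\CH(\tilde a\cup\tilde b\cup\tilde c)\cap\overline{H^-}\subseteq K_{ab}$; granting this, $\tilde e\cap\overline{H^-}\subseteq K_{ab}$. The second is an elementary reflection estimate: placing $AB$ on the $x$-axis with $C$ above it, for any $q\in\tilde e\cap\overline{H^+}$ one has $|\sigma(q)-E|^2-|q-E|^2=4\,y_q\,y_E\le 0$ because $y_q\ge 0$ and $y_E\le 0$, so $\sigma(q)\in\tilde e\cap\overline{H^-}\subseteq K_{ab}$; applying $\sigma$ and using $\sigma(K_{ab})=K_{ab}$ gives $q\in K_{ab}$. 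Hence $\tilde e\cap\overline{H^+}\subseteq K_{ab}$ as well, so $\tilde e\subseteq K_{ab}$, the desired contradiction.

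The hard part will be the containment claim $\CH(\tilde a\cup\tilde b\cup\tilde c)\cap\overline{H^-}\subseteq K_{ab}$. On the far side of $AB$ the hull boundary is the tangent $m^-$ together with arcs of $a$ and $b$ (Corollary~\ref{tight}), so a hull point lying in $\overline{H^-}$ but outside $K_{ab}$ would have to lie above $m^-$ yet outside both disks $\tilde a$ and $\tilde b$ — a region that is itself outside the hull; this is immediate when the circle $c$ does not cross line $AB$, as in the typical configuration (3) of Figure~\ref{fig:WeakCarousel3}. I expect the delicate point to be the extreme sub-case of Figure~\ref{fig:WeakCarousel4}, where one arc exceeds a semicircle and $c$ may approach or cross $AB$; there one must verify directly that the portion of the hull in $\overline{H^-}$ still cannot escape $K_{ab}$, arguing from the supporting line $m^-$ and the location of the tangent points rather than from the position of $c$.
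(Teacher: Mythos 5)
Your reduction to three half-plane statements and your reflection estimate are both correct, but the proof breaks at the containment claim $\CH(\tilde a\cup\tilde b\cup\tilde c)\cap\overline{H^-}\subseteq K_{ab}$, and it breaks much earlier than you expect: the claim is already false in the generic ``inscribed in a triangle'' configuration of Figure \ref{fig:WeakCarousel3}, not only in the extreme case of Figure \ref{fig:WeakCarousel4}. Take $a,b,c$ to be unit circles centered at $A=(0,0)$, $B=(10,0)$, $C=(20,5)$. One checks this is configuration (3) of Lemma \ref{3 circles} (no circle lies in the hull of the other two, no circle meets both outer tangents of the other two, and all three boundary arcs are shorter than a semicircle), and $\tilde c$ lies entirely above the line $AB$. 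Nevertheless, the outer tangent of $b$ and $c$ that appears on the hull boundary touches $b$ at $B+(1,-2)/\sqrt5\approx(10.45,-0.89)$, i.e.\ \emph{below} the line $AB$, and only crosses $AB$ at $x\approx 12.24$. Hence the hull contains points such as $(12,-0.05)$ which lie in $\overline{H^-}$ but far outside $K_{ab}$ (which ends at $x=11$). Your justification assumed that the hull boundary inside $\overline{H^-}$ consists only of $m^-$ and arcs of $a$ and $b$; that is false whenever the outward perpendicular to $BC$ at $B$ points below $AB$ (roughly, whenever the triangle is obtuse at $B$), because then a piece of the $bc$-tangent segment also lies below $AB$. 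So the case you called ``immediate'' is exactly where the claim fails.

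Worse, the statement you are steering toward --- $E\in\overline{H^-}$ forces $ab\to e$ --- is itself false, so no amount of care in the ``delicate case'' can rescue this route. In the example above, a circle $e$ of radius $0.01$ centered at $(12,-0.05)$ lies inside $\CH(\tilde a\cup\tilde b\cup\tilde c)$, has center in $\overline{H^-}$, and is disjoint from $K_{ab}$; tightness of $abc\to e$ does fail for it, but because $e\in\ch_c(b,c)$ (its center sits in what the paper's proof calls the trapezoid between $B$ and $C$, and $\tilde e$ lies in $\CH(\tilde b\cup\tilde c)$), not because $ab\to e$. This is precisely why the paper does not cut along the lines $AB$, $BC$, $CA$: it decomposes $\CH(\tilde a\cup\tilde b\cup\tilde c)$ minus the interior of $\triangle ABC$ into three trapezoids and three circular sectors, and attributes each piece to the \emph{correct} violated sub-implication ($E$ in a sector of $a$ gives $r_e\le r_a$, hence $a\to e$; $E$ in the $bc$-trapezoid gives $r_e$ at most the distance to the tangent segment, hence $bc\to e$). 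A half-plane decomposition is too coarse to make this attribution, and once you refine it to match the trapezoid/sector decomposition you have reproduced the paper's argument, with the reflection trick doing no work.
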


\begin{proof}
If implication $abc\to e$ is tight, then by Lemma \ref{no12} circles $a,b,c$ cannot form any configuration from (1) and (2) of Lemma \ref{3 circles}.

Moreover, since $abc\to e$ is tight, the centers of $a,b,c$ cannot be on a line, because in that case one obtains one of configurations (1) or (2) in Lemma \ref{3 circles}. Therefore, the centers form a triangle.

By Corollary \ref{tight}, the boundary of $\CH(\tilde{a}\cup\tilde{b}\cup\tilde{c})$ comprises three tangent segments and three arcs, as in Figure \ref{fig:CenterTriangle}.
The convex hull of three circles $a,b,c$ splits into two parts:
\begin{enumerate}[(I)]

\item the interior $int(\triangle ABC)$ of triangle  formed by the centers of the circles; 

\item the shape 
$S=
\CH(\tilde{a}\cup\tilde{b}\cup\tilde{c})\setminus int(\triangle ABC)$
.
\end{enumerate}
Moreover, $S$ splits into the union of the following shapes: 
\begin{enumerate}[(I)]

\item trapezoids formed between pairs of circles, each of which includes a segment connecting the centers of the two circles, two radii, and the circles' common tangent segment; 
\item sectors of circles bordered by the radii which are the borders of trapezoids from (I).
\end{enumerate}
If circle $e$ is inside of the convex hull of $a,b,c$, and its center $E$ is not in the interior of $\triangle ABC$, then it is in $S$. Thus it is either in one of the trapezoids or in one of the sectors. 

Without loss of generality, suppose that $E$ is inside of one of the sectors of circle $a$. Then the distance $\sigma$ from $E$ to the border of circle is $\leq r_a$. Therefore, $r_e\leq r_a$ and $e\in \ch_c(a)$. Here we use $r_x$ to denote the radius of circle $x$.

Suppose now that $E$ is in one of the trapezoids. Without loss of generality, suppose that $E$ is in $BCGF$ in Figure
\ref{fig:CenterTriangle}, where $FG$ is the part of the border of $\CH(\tilde{a}\cup\tilde{b}\cup\tilde{c})$, and $\sigma$ is the distance from $E$ to segment $FG$. Then $r_e\leq \sigma$ and $e \in \ch_c(b,c)$. 

Therefore, the center of $e$ for which $abc\to e$ is tight cannot be in either the trapezoids or sectors, and thus it must be in interior of $\triangle ABC$.
\end{proof}

\noindent The following statement is a generalization of the Triangle Property.

\begin{corollary}\label{ncircles}
If $n>2$, $a_1,a_2,a_3,\cdots,a_n,e$ are circles in the plane with centers $A_1,A_2,\cdots,A_n,E$ respectively, and $a_1a_2a_3\cdots a_n\to e$ is tight then $E\in\CH(A_1,A_2,\cdots,A_n)$.
\end{corollary}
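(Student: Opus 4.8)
The plan is to prove the statement directly: assuming $a_1\cdots a_n \to e$ is tight, I would show that $E$ cannot escape $\CH(A_1,\ldots,A_n)$ by generalizing the region-decomposition from the proof of Lemma \ref{triangle}. First I would record that, exactly as in the three-circle case, the boundary of $K := \CH(\tilde a_1 \cup \cdots \cup \tilde a_n)$ is composed only of circular arcs (each an arc of some $a_i$) and common outer tangent segments joining consecutive boundary arcs; this is the $n$-circle analogue of Lemma \ref{borders}. Observe that $E \in \tilde e \subseteq K$, so it suffices to locate $E$ within a suitable part of $K$.

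Next I would decompose $K$ into a central region $C$ together with a surrounding ``collar.'' The collar is tiled by two kinds of cells, in analogy with Lemma \ref{triangle}: behind each boundary arc of a circle $a_j$ sits a circular sector with apex $A_j$ (contained in $\tilde a_j$), and behind each tangent segment between two consecutive boundary circles $a_j, a_k$ sits a trapezoid whose vertices are $A_j$, $A_k$ and the two tangent points (contained in $\CH(\tilde a_j \cup \tilde a_k)$); the central region $C$ is whatever remains. The crucial observation is that the inner boundary $\partial C$ consists precisely of the center-to-center segments $A_jA_k$ (the inner edges of the trapezoids) meeting at apex centers $A_j$ (from the sectors). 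Hence $\partial C$ is a closed polygonal curve all of whose vertices lie in $\{A_1,\ldots,A_n\}$, and therefore $C \subseteq \CH(A_1,\ldots,A_n)$.

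With this decomposition in hand the counting argument mirrors Lemma \ref{triangle}. Since $E \in K$, either $E \in C$ or $E$ lies in some collar cell. The distance estimates from the proof of Lemma \ref{triangle} are genuinely local, using only the shape of $\partial K$ near the cell containing $E$, so they transfer verbatim: if $E$ lies in the sector of $a_j$ then $\tilde e \subseteq \tilde a_j$, giving $a_j \to e$, and if $E$ lies in the trapezoid of $a_j, a_k$ then $\tilde e \subseteq \CH(\tilde a_j \cup \tilde a_k)$, giving $a_ja_k \to e$. Because $n > 2$, in either case I may delete from $\{a_1,\ldots,a_n\}$ a circle outside $\{a_j\}$ (respectively outside $\{a_j,a_k\}$) and still retain an implication onto $e$; by the definition of tightness together with Lemma \ref{non-tight} this contradicts the tightness of $a_1\cdots a_n \to e$. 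Thus $E$ must lie in $C$, and hence in $\CH(A_1,\ldots,A_n)$.

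The step I expect to be the main obstacle is justifying the decomposition cleanly for arbitrary $n$, rather than the final counting. Unlike the three-circle case, a single circle may contribute several disjoint arcs to $\partial K$ (for instance, a large circle whose center lies inside the convex hull of the other centers can bulge past several edges of the center-polygon), so the boundary circles need not be in convex position and the collar need not be a simple annulus. The point to get right is that none of this affects the conclusion: however the arcs interleave, every inner edge of the collar is a center-to-center segment or an apex center, so $\partial C$ always has its vertices among the $A_i$ and the containment $C \subseteq \CH(A_1,\ldots,A_n)$ survives. Verifying that the sectors and trapezoids really do tile $K \setminus C$ without gaps or overlaps in this general setting is the one place where genuine care is needed.
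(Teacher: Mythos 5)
Your strategy is the one the paper intends---in fact the paper gives no proof of Corollary \ref{ncircles} at all (it says the argument ``proceeds similar to the case of three circles and can be omitted''), so your proposal is an attempt to supply exactly that generalization. Two of your three steps are sound: the local estimates do transfer (if $E$ lies in a sector of $a_j$, the support line of $K=\CH(\tilde a_1\cup\cdots\cup\tilde a_n)$ at the arc point in direction $(E-A_j)/|E-A_j|$ forces $|E-A_j|+r_e\le r_j$, so $e\in\ch_c(a_j)$; the trapezoid estimate likewise gives $e\in\ch_c(a_j,a_k)$), and the tightness count for $n>2$ is correct as stated (the definition of tightness plus monotonicity of $\ch_c$ suffices; Lemma \ref{non-tight} is not actually needed). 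The genuine gap is the step you flag but never close: the assertion that the cells together with $\CH(A_1,\ldots,A_n)$ cover $K$. Your justification---``$\partial C$ is a closed polygonal curve with vertices among the $A_i$, hence $C\subseteq\CH(A_1,\ldots,A_n)$''---presupposes that $K\setminus\bigcup(\text{cells})$ is the region \emph{enclosed} by that polygon, which is precisely what has to be proved, and it is problematic in exactly the situation you identify: when one circle contributes several boundary arcs, the center polygon is non-simple (it revisits centers), ``the region it bounds'' is not well-defined, and a tiling ``without gaps or overlaps'' is both unclear and stronger than what you need. So, as written, the proof is incomplete at its central step.

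The fix is to prove only the covering statement $K\subseteq\CH(A_1,\ldots,A_n)\cup\bigcup(\text{cells})$, which has a short argument avoiding all questions about simplicity of the polygon or disjointness of cells. Suppose $x\in K$ lies in no cell and $x\notin\CH(A_1,\ldots,A_n)$. Choose a unit vector $u_0$ with $\langle x-A_i,u_0\rangle>0$ for every $i$, and let $z=x+s_0u_0$, $s_0>0$, be the \emph{first} point of the ray $\{x+su_0: s>0\}$ lying in the (closed) union of cells; $z$ exists because the ray exits $K$ through $\partial K$, and every point of $\partial K$ lies on an arc or tangent segment, hence in a cell. Now check two local facts: every point of $\partial K$ has a neighborhood within $K$ covered by cells (interior arc points by the sector, interior tangent points by the trapezoid, tangent points $P$ by the sector and trapezoid that share the radius $[A_j,P]$), and every relative-interior point of a radius $[A_j,P]$ has a full neighborhood covered, since each radius is an edge of both a sector and a trapezoid. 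Hence $z$ can be none of these, so $z$ lies on a center segment $[A_j,A_k]$ or is itself a center. But then $\langle z,u_0\rangle\le\max_i\langle A_i,u_0\rangle<\langle x,u_0\rangle<\langle x,u_0\rangle+s_0=\langle z,u_0\rangle$, a contradiction. With this covering lemma in place, your sector/trapezoid estimates and the tightness count complete the proof of the corollary.
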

\begin{proof}
The proof proceeds similar to the case of three circles and can be omitted.

\end{proof}

\begin{corollary}\label{wedge}
Convex geometries representable by circles cannot have three tight implications $abc\to e$, $abd\to e$, $acd\to e$.
\end{corollary}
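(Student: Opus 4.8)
The plan is to argue by contradiction, leaning entirely on the Triangle Property (Lemma \ref{triangle}). Suppose some convex geometry representable by circles carried all three tight implications $abc\to e$, $abd\to e$, $acd\to e$, and fix a representation with circle centers $A,B,C,D,E$. Applying Lemma \ref{triangle} once to each tight implication, I would conclude that $E$ lies simultaneously in the interior of $\triangle ABC$, of $\triangle ABD$, and of $\triangle ACD$. The whole statement then reduces to a purely planar claim about four points: the three triangles spanned by a common apex $A$ together with two of the remaining three points $B,C,D$ cannot share an interior point. (Tightness of each implication also guarantees, via the first paragraph of the proof of Lemma \ref{triangle}, that every triple containing $A$ is non-collinear, so all orientations used below are well defined; note that the possibly-collinear triple $B,C,D$ is never invoked.)

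First I would record the trivial but crucial observation that a nonempty common interior forces every \emph{pairwise} intersection of the three interiors to be nonempty. Next I would characterize when two of these triangles overlap: any two of them share exactly one edge of the form $AX$, and two triangles sharing the edge $AX$ have intersecting interiors precisely when their two apexes lie on the same side of the line $AX$. Writing $f(P,Q,R)\in\{+1,-1\}$ for the orientation of an ordered non-collinear triple, the three "same side" conditions become $f(A,B,C)=f(A,B,D)$, $f(A,C,B)=f(A,C,D)$, and $f(A,D,B)=f(A,D,C)$.

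I would then push these three sign equations through the antisymmetry of $f$ under transposition of its arguments. Setting $p=f(A,B,C)$, $q=f(A,B,D)$, $r=f(A,C,D)$, the first condition reads $p=q$, the second becomes $-p=r$, and the third becomes $q=r$; chaining them yields $p=q=r$ together with $r=-p$, hence $p=-p$, which is impossible since $p=\pm1$. This contradiction shows the three triangle interiors have empty common intersection, so the required center $E$ cannot exist, and the three tight implications cannot coexist.

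The part requiring the most care is the overlap characterization together with the sign bookkeeping of the previous step: one must verify that "interiors meet" is genuinely \emph{equivalent} to "apexes on the same side of the shared edge" (the forward direction uses a small displacement from the midpoint of the shared edge toward the common side; the reverse uses that opposite sides place the triangles in opposite closed half-planes), and then track the transpositions carefully. A more elementary alternative would split into the convex-position case, where the diagonal of the quadrilateral emanating from $A$ separates two of the triangles, and the case where one of the four centers lies inside the triangle of the other three, where the three cevians from that point partition the big triangle into pieces with pairwise disjoint interiors; but the orientation computation is preferable because it dispatches all configurations at once.
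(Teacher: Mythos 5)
Your proof is correct, and it rests on the same key lemma as the paper's argument (the Triangle Property, Lemma \ref{triangle}), but the way you reach the planar contradiction is genuinely different. The paper's proof is constructive: it considers the three angles at $A$ formed by the rays $AB$, $AC$, $AD$, relabels $b,c,d$ (legitimate, since the three implications are symmetric in $b,c,d$) so that the two smallest angles share the ray $AB$, observes that each of these is less than $\pi$, and concludes that $C$ and $D$ lie strictly on opposite sides of the line $AB$ --- so the interiors of $\triangle ABC$ and $\triangle ABD$ are disjoint, and in the end only two of the three implications are fed into Lemma \ref{triangle}. You instead apply the lemma to all three implications and show that the three pairwise ``apexes on the same side of the shared edge'' conditions are inconsistent, via the antisymmetry of the orientation function: your sign bookkeeping ($p=q$, $-p=r$, $q=r$, hence $p=-p$) is correct, and it is a parity contradiction rather than an explicit construction of a separating line. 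What the paper's route buys is brevity and a picture (Figure \ref{fig:Triangle3}); what yours buys is that it needs no choice of minimal angles or WLOG relabeling, treats all configurations uniformly, and makes the non-degeneracy explicit (every triple containing $A$ is non-collinear by tightness, and the possibly-degenerate triple $B,C,D$ is never invoked --- a point the paper leaves implicit). Both proofs ultimately reduce to the same elementary fact, that two triangles sharing an edge whose apexes lie on opposite sides of that edge have disjoint interiors; the paper exhibits such a pair explicitly, while you show no assignment of sides can avoid producing one. One small remark: of your two directions of the overlap characterization, only ``opposite sides $\Rightarrow$ disjoint interiors'' is actually used in the argument, so the midpoint-displacement construction proving the converse could be dropped without loss.
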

\begin{proof}
Indeed, consider $A,B,C,D,E$, the centers of circles that might represent this geometry. Connect $A$ with $B,C$ and $D$. Then the plane splits into three angles at vertex $A$. Choose two of smallest angles among these three. Without loss of generality, assume that the smallest are $\angle CAB$ and $\angle BAD$. Then each of the two smallest angles are less than $\pi$, thus, line $(AB)$ splits the plane into two semi-planes, so that $C$ and $D$ are in distinct
semi-planes (see illustration in Figure \ref{fig:Triangle3}).

\begin{figure}[H]
    \centering
  \begin{tikzpicture}[thick,fill opacity=0.5]
    \node[draw,circle,xshift=2.2cm,minimum size=15mm,outer sep=0] (b) {};
       \node[draw,circle,minimum size=15mm,outer sep=0] (c) {};
       \node[draw,circle,xshift=2.25cm,yshift=-2.2cm,minimum size=20mm,outer sep=0] (a) {};
       \node[draw,circle,xshift=4.5cm,yshift=0.8cm,minimum size=15mm,outer sep=0] (d) {};
       
     \filldraw [black, thick] (0,0) circle[radius=0.075] node[anchor=south] {$C$};
     \filldraw [black, thick] (2.2,0) circle[radius=0.075] node[anchor=south] {$B$};
     \filldraw [black, thick] (4.5,0.8) circle[radius=0.075] node[anchor=south] {$D$};
     \filldraw [black, thick] (2.3,-2.2) circle[radius=0.075] node[anchor=north] {$A$};
     
     \draw (0,0) -- (2.3,-2.2cm);
     \draw (4.5,0.8) -- (2.3,-2.2cm);
     \draw (2.2,0) -- (2.3,-2.2cm);
     
    \draw (tangent cs:node=b,point={(a.west)},solution=4) -- (tangent cs:node=a,point={(b.west)});
    \draw (tangent cs:node=b,point={(a.east)},solution=0) -- (tangent cs:node=a,point={(b.east)},solution=2);
    
    \draw (tangent cs:node=b,point={(c.north)},solution=2) -- (tangent cs:node=c,point={(b.north)});
    \draw (tangent cs:node=b,point={(c.south)},solution=1) -- (tangent cs:node=c,point={(b.south)},solution=2);
    
    \draw (tangent cs:node=a,point={(c.east)},solution=2) -- (tangent cs:node=c,point={(a.east)});
    \draw (tangent cs:node=a,point={(c.west)},solution=1) -- (tangent cs:node=c,point={(a.west)},solution=2);
    
    \draw (tangent cs:node=a,point={(d.south)},solution=2) -- (tangent cs:node=d,point={(a.south)});
    \draw (tangent cs:node=a,point={(d.north)},solution=1) -- (tangent cs:node=d,point={(a.north)},solution=2);
    
    \draw (tangent cs:node=b,point={(d.south)},solution=2) -- (tangent cs:node=d,point={(b.south)});
    \draw (tangent cs:node=b,point={(d.north)},solution=1) -- (tangent cs:node=d,point={(b.north)},solution=2);
    
    \draw[black] (1.1, -2.8)node {$a$};
    \draw[black] (-0.5, .8)node {$c$};
    \draw[black] (3.7, 1.6)node {$d$};
    \draw[black] (1.7, 1.)node {$b$};
        
    \end{tikzpicture}
\caption{Corollary \ref{wedge}}
\label{fig:Triangle3}
\end{figure}

By Lemma \ref{triangle}, $E$ must be in the interior of $\triangle ABC$ as well as in the interior of $\triangle ABD$, and since those interiors do not intersect, there is no possible location for $E$.
\end{proof}

{\bf This gives us six geometries on the list that are not representable: G7, G11, G12, G18, G21 and G34. Note that G7, G21 and G34 have cdim=4.}

\begin{corollary}
Convex geometries representable by circles cannot have three tight implications $abc\to d$, $acd\to e$, $bcd\to e$.
\end{corollary}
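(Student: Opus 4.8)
The plan is to reuse the mechanism of Corollary \ref{wedge}: pin the center $E$ of circle $e$ inside two triangle interiors that turn out to be disjoint. Write $A,B,C,D,E$ for the centers of $a,b,c,d,e$. First I would apply the Triangle Property (Lemma \ref{triangle}) to the tight implication $abc\to d$, which forces $D$ into the open interior $int(\triangle ABC)$. Then, applying Lemma \ref{triangle} to the remaining two tight implications $acd\to e$ and $bcd\to e$, I obtain the two constraints $E\in int(\triangle ACD)$ and $E\in int(\triangle BCD)$ simultaneously.

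The core of the argument is then an elementary planar separation fact. Since $D$ lies strictly inside $\triangle ABC$, the ray from $C$ through $D$ exits the triangle through the interior of the opposite edge $AB$, meeting it at a point $P$ strictly between $A$ and $B$. Because $C$ is not collinear with $A$ and $B$, the line $CD$ (which equals line $CP$) meets segment $AB$ only at the interior point $P$; hence $A$ and $B$ lie in the two distinct open half-planes determined by line $CD$.

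It follows that the triangles $\triangle ACD$ and $\triangle BCD$ lie on opposite sides of line $CD$, sharing only the common edge $CD$, so their interiors are disjoint. This contradicts the requirement that $E$ belong to both $int(\triangle ACD)$ and $int(\triangle BCD)$. Therefore no assignment of circles $a,b,c,d,e$ can realize all three tight implications, which proves the corollary.

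I expect the only subtle point to be the separation claim, namely justifying that an interior point $D$ of $\triangle ABC$ places $A$ and $B$ on opposite sides of line $CD$. This is visually obvious but deserves a clean statement; the cleanest route is to observe that the segment $CD$ splits $\triangle ABC$ into exactly the two sub-triangles $\triangle ACD$ and $\triangle BCD$, which tile it with disjoint interiors, so a point common to both interiors cannot exist.
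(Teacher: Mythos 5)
Your proof is correct and follows essentially the same route as the paper: apply Lemma \ref{triangle} to the tight implication $abc\to d$ to place $D$ in $int(\triangle ABC)$, then to $acd\to e$ and $bcd\to e$ to force $E$ into both $int(\triangle ACD)$ and $int(\triangle BCD)$, which is impossible; the paper merely asserts this impossibility, while you justify it with the correct half-plane argument (the ray from $C$ through $D$ meets the open segment $AB$, so line $CD$ strictly separates $A$ from $B$, hence the two interiors lie in disjoint open half-planes). One caution: your closing remark is false as stated, since segment $CD$ does not split $\triangle ABC$ into exactly $\triangle ACD$ and $\triangle BCD$ --- their union omits $\triangle ABD$, and it is the three triangles $\triangle ABD$, $\triangle ACD$, $\triangle BCD$ that tile $\triangle ABC$ --- so you should let the separation argument, not the tiling claim, stand as the justification of disjointness.
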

\begin{proof}
Assume that the geometry does have a representation as a geometry of circles.  The proof then follows from the use of Lemma \ref{triangle} two times. First, since $abc \rightarrow d$ is a tight implication, $D$ must lie in the interior of triangle formed by vertices $A$, $B$ and $C$ (which are the centers of $a,b,c$). Moreover, we have two tight implications $acd \rightarrow e$, $bcd \rightarrow e$. However, by Lemma \ref{triangle} again, we would need $E$ to be in the interior of both $\triangle ACD$ and $\triangle BCD$, which is not possible. 

\end{proof}
{\bf This gives us that the additional geometry G23 cannot be represented by circles on the plane.}

\section{Acknowledgments.}
The project was initiated by several faculties from across the US led by Adam Sheffer (Baruch College, NY). These include Benjamin Brubaker (University of Minnesota), Victor Reiner (University of Minnesota), Patrick Devlin (Yale University), Yunus Zeytuncu (University of Michigan, Dearborn), Steven J. Miller (Williams College, MA) and Alexandra Seceleanu (University of Nebraska, Lincoln). Without the support and initiative of these colleagues and the unique circumstances of 2020, this project would not have started. More details about PolyMath-2020 can be in found in \cite{Lemons}.\\ \\
\textit{Address for correspondence:}\\
Kira Adaricheva, Department of Mathematics, Hofstra University, Hempstead NY, 11549\\
\textit{Email address:} \href{mailto:kira.adaricheva@hofstra.edu}{\texttt{Kira.Adaricheva@Hofstra.edu}}
\newpage



\newpage

\appendix
\appendixpage
\addappheadtotoc

\section{}


\subsection{Description of size 4 geometries} \chead{\sc Appendix A.1: Description of size 4 geometries} \input{anc/size-4-less-pretty.txt} 

\subsection{Description of size 5 geometries} \chead{\sc Appendix A.2: Description of size 5 geometries} \input{anc/size-5-less-pretty.txt} 

\newpage

\section{}





\subsection{Representation of size 4 geometries} \label{rep size 4} 
\chead{\sc Appendix B.1: Representation of size 4 geometries} 
\input{anc/Appendix_B.1} 

\subsection{Representation of size 5 geometries} \label{rep size 5} 
\chead{\sc Appendix B.2: Representation of size 5 geometries} 
\input{anc/Appendix_B.2} 

\newpage
\subsection{Impossible Geometries} \label{impossibles} \chead{\sc Appendix B.3: Impossible Geometries} 
\input{anc/Appendix_B.3} 


\end{document}